\renewcommand{\emph}[1]{\textit{#1}}
\definecolor{brown}{cmyk}{0, 0.72, 1, 0.45}
\definecolor{grey}{gray}{0.5}
\newcommand{\old}[1]{}
\newcounter{rot}
\newcommand{\ignore}[1]{}
\def\bM{{\bf M}}
\def\cA{{\mathcal A}}
\def\cB{{\mathcal B}}
\newcommand{\set}[1]{\left\{#1\right\}}
\def\hc{\widehat{c}}
\newcommand{\proofend}{\hspace*{\fill}\mbox{$\Box$}\\ \medskip\\ \medskip}
\def\ii_(#1,#2){i_{#1}^{#2}}
\def\cM{\mathcal{M}}
\def\a{\alpha}
\def\b{\beta}
\def\e{\varepsilon}
\def\g{\gamma}
\def\z{\zeta}
\def\th{\theta}
\def\r{\rho}
\newcommand{\rdup}[1]{\left\lceil #1 \right\rceil}
\newcommand{\rdown}[1]{\mbox{$\left\lfloor #1 \right\rfloor$}}
\newcommand{\ra}{\longrightarrow}
\newcommand{\rai}{\ra \infty}
\newcommand{\ooi}{(1+o(1))}
\def\cE{\mathcal{E}}
\newcommand{\brac}[1]{\left( #1 \right)}
\def\cH{\mathcal H}
\def\E{{\bf E}}
\renewcommand{\Pr}{\operatorname{\bf Pr}}
\newcommand\bfrac[2]{\left(\frac{#1}{#2}\right)}
\def\bC{\bar{C}}
\newtheorem{theorem}{Theorem}[section]
\newtheorem{lemma}[theorem]{Lemma}
\newtheorem{corollary}[theorem]{Corollary}
\newcounter{thmtemp}
\newcommand{\nospace}[1]{}
\def\path{\operatorname{PATH}}
\newcommand{\beq}[2]{\begin{equation}\label{#1}#2\end{equation}}
\newcommand{\mult}[2]{\begin{multline}\label{#1}#2\end{multline}}
\def\bA{{\bf A}}
\def\rank{\mathrm{rank}}
\def\bC{{\bf C}}
\def\bc{{\bf c}}
\def\bM{{\bf M}}
\newcommand{\Ind}{\mathds{1}}
\begin{document}
\title{On the rank of a random binary matrix}
\author{Colin Cooper\thanks{Research supported in part by EPSRC grant EP/M005038/1}\and
Alan Frieze\thanks{Research supported in part by NSF Grant DMS1661063}\and Wesley Pegden\thanks{Research supported in part by NSF grant DMS1363136}}
\maketitle

\begin{abstract}
We study the rank of a random $n \times m$ matrix $\bA_{n,m;k}$ with entries
from $GF(2)$, and exactly $k$ unit entries in each column, the other
entries being zero. The columns are chosen independently and uniformly at random from the
set of all ${n \choose k}$ such columns.

We obtain an asymptotically correct estimate for the rank as a
function of the number of columns $m$ in terms of $c,n,k$, and where
$m=cn/k$. The matrix $\bA_{n,m;k}$ forms the vertex-edge incidence matrix
of a $k$-uniform random hypergraph $H$. The rank of $\bA_{n,m;k}$ can be
expressed as follows. Let $|C_2|$ be the number of vertices of the
2-core of $H$, and $|E(C_2)|$ the number of edges. Let $m^*$ be the value
of $m$ for which $|C_2|= |E(C_2)|$. Then w.h.p. for $m<m^*$ the rank of
$\bA_{n,m;k}$ is asymptotic to $m$, and for $m \ge m^*$ the rank is
asymptotic to $m-|E(C_2)|+|C_2|$.

In addition, assign i.i.d. $U[0,1]$ weights $X_i, i \in {1,2,...m}$ to
the columns, and define the weight of a set of columns $S$ as
$X(S)=\sum_{j \in S} X_j$. Define a basis as a set of $n-\Ind (k\text{ even})$
linearly independent columns. We obtain an asymptotically correct
estimate for the minimum weight basis. This generalises the
well-known result of Frieze [On the value of a random minimum
spanning tree problem, Discrete Applied Mathematics, (1985)] that, for
$k=2$,   the expected length of a minimum weight spanning
tree tends to $\zeta(3)\sim 1.202$.
\end{abstract}

\section{Introduction}
Let $\Omega_{n,k}$ denote the set of  vectors of length $n$, with $0,1$ entries, with exactly $k$ 1's, all other entries being zero.  The addition of entries is over the field $GF_2$, i.e., the vector addition is over  $(GF_2)^n$. Let $\bA_{n,m;k}$ be the random $n\times m$  matrix where the  columns form a random $m$-subset of $\Omega_{n,k}$.

In a recent paper \cite{minors},  we studied the binary matroid $\cM_{n,m;k}$ induced by the columns of $\bA_{n,m;k}$. It was shown that for any fixed binary matroid $M$, there were constants $k_M,L_M$ such that if $k\geq k_M$ and $m\geq L_mn$ then w.h.p.  $\cM_{n,m;k}$  contains $M$ as a minor.
The paper \cite{minors} contributes to the theory of random matroids as developed by  \cite{AY}, \cite{BPP}, \cite{KL}, \cite{KO}, \cite{OSWW}.
In this paper we study a related aspect of $\bA_{n,m;k}$, namely its  rank, and improve on results from Cooper \cite{Co1}. As a consequence of the precise estimate of rank in
Theorem \ref{th1}  we can   give an expression, \eqref{exp}, for the solution value of the following optimization problem.

Suppose that we assign i.i.d. $U[0,1]$ weights $X_\bc$ to the vectors $\bc \in\Omega_{n,k}$ and let the weight of a set of columns $S$ be $X(S)=\sum_{\bc\in S}X_\bc$. Define a {\em basis} as a set of $n-\Ind (k\text{ even})$ linearly independent columns. What is the expected weight $W_{n,k}$ of a minimum weight basis? When $k=2$ this amounts to estimating the expected length of a minimum weight spanning tree of $K_n$ which has the limiting value of $\z(3)$, see Frieze \cite{F1}.

Our result on the rank of $\bA_{n,m;k}$ takes a little setting up. Let $H=H_{n,m;k}$ denote the random $k$-uniform hypergraph with vertex set $[n]$ and $m$ random edges taken from $\binom{[n]}{k}$. There is a natural bijection between $\bA_{n,m;k}$ and $H_{n,m;k}$ in which column $\bc$ is replaced by the set $\set{i:\bc_i=1}$. The $\r$-core of a hypergraph $H$ (if it is non-empty) is the maximal set of vertices that induces a sub-hypergraph of minimum degree $\r$. The 2-core $C_2=C_2(H)$ plays an important role in our first theorem.

\subsection{Matrix Rank}
{\bf Notation:} We write $X_n\approx Y_n$ for sequences $X_n,Y_n,n\geq 0$ if $X_n=(1+o(1))Y_n$ as $n\to\infty$. We will use some results on the 2-core of random hypergraphs. The size of of the 2-core has been asymptotically determined, see for example Cooper \cite{CH} or Molloy \cite{M}; we recall the basic w.h.p. results here.
In random graphs $G_{n,m}=H_{n,m;2}$ the 2-core grows gradually with  $m$ following the emergence of the first cycle of size $O(\log n)$. For $k \ge 3$, the 2-core is either empty or of linear size and emerges around some threshold value $\widehat m_k$. Initially above $\widehat m_k$ the 2-core has more vertices than edges, and there is a larger value  $m^*$, around which the number of vertices and edges becomes the same. Below $m^*$ the rank of the 2-core grows asymptotically as the number of edges, and above $m^*$ as the number of vertices.

To describe the size of the 2-core, we parameterise $m$ as $m=cn/k,\,c=O(1)$ and consider the equation
\beq{cstar}{
x=(1-e^{-cx})^{k-1}.
}
For $k\geq 3$, define $\hc_k$  by
\[
\hc_k=\min\set{c: x=(1-e^{-cx})^{k-1} \text{ has a solution  }x_c \in (0,1]}.
\]
It is known that $c<\hc_k$ implies that $C_2=\emptyset$. If
$c>\hc_k,\;c=O(\log n)$, let $x_c$ be the largest solution to \eqref{cstar} in $[0,1]$. Then q.s.\footnote{A sequence $\cE_n$ of events occurs {\em quite surely} (q.s.) if $\Pr(\neg\cE_n)=O(n^{-C})$ for any constant $C>0$.}
\begin{align}
&\left| \,|C_2|-n(x_c^{1/(k-1)}-cx_c+cx_c^{k/(k-1)}) \right|\leq n^{3/4},\label{sizeC2}\\
&\left| \,|E(C_2)|-n(cx_c^{k/(k-1)}/k) \right|\leq n^{3/4}.\label{sizeC2a}
\end{align}
Let $c_k^*$ be the value of $c$ for which the 2-core has asymptotically the same number of vertices and edges. More precisely, we use \eqref{sizeC2} and \eqref{sizeC2a} to define $c_k^*$ by
\beq{defck}{
c_k^*:=\min\set{c\geq \hc_k:x_c^{1/(k-1)}-cx_c+cx_c^{k/(k-1)}=\frac{cx_c^{k/(k-1)}}{k}}.
}
Define $m^*$ by $m_k^*=c_k^*n/k$.
We will prove,
\begin{theorem}\label{th1}
If $m=O(n)$ then w.h.p.
$$\rank(\bA_{n,m;k})\approx \begin{cases}|E(H)|&m<m_k^*.\\
|E(H)|-|E(C_2)|
+|C_2|&m\geq m_k^*. \end{cases}$$
\end{theorem}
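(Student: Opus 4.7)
The plan is to reduce $\rank(\bA_{n,m;k})$ to the rank of the vertex-edge incidence matrix $\bA_{C_2}$ of the 2-core, and then attack the reduced problem on each side of $m_k^*$ by a first-moment estimate after conditioning on the degree sequence of $C_2$.

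Over $GF(2)$, a subset $S\subseteq E(H)$ indexes a linear dependency among the columns of $\bA_{n,m;k}$ iff $(V(H),S)$ has every vertex of even degree. If $v$ has degree $1$ in $H$ the unique edge containing $v$ cannot lie in any such $S$, so removing it reduces to the same problem on a smaller hypergraph; iterating is exactly the peeling that defines $C_2$. Hence the null space of $\bA_{n,m;k}$ is canonically identified with that of $\bA_{C_2}$, and
$$\rank(\bA_{n,m;k})=m-\bigl(|E(C_2)|-\rank(\bA_{C_2})\bigr).$$
For $m<m_k^*$, either $C_2=\emptyset$ or (by \eqref{sizeC2}--\eqref{sizeC2a} and \eqref{defck}) $|E(C_2)|\le(1-\eta(c))|C_2|$ for some $\eta(c)>0$, and the goal is $\rank(\bA_{C_2})\approx|E(C_2)|$. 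For $m\ge m_k^*$ the goal is instead $\rank(\bA_{C_2})\approx|C_2|$, equivalently
$$\dim\set{S\subseteq V(C_2):\; |e\cap S|\text{ even for all }e\in E(C_2)}=o(|C_2|).$$

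Both goals reduce to first-moment estimates after conditioning on the degree sequence of $C_2$, which is concentrated by \cite{CH}, \cite{M} and which makes $C_2$ contiguous to a configuration-model $k$-uniform hypergraph. For $m<m_k^*$, the vertex excess $|C_2|-|E(C_2)|\ge\eta|C_2|$ suppresses the probability that a random configuration with a prescribed edge set of size $t$ produces even degrees at every vertex, so the expected number of nonempty even sub-hypergraphs of $C_2$ is $2^{o(|E(C_2)|)}$ and hence $\dim\ker(\bA_{C_2})=o(m)$ w.h.p. For $m\ge m_k^*$, the probability that a fixed $S$ of size $s$ has every edge of even intersection is, in the configuration model, approximately $\bigl(\tfrac12(1+(1-2s/|C_2|)^k)\bigr)^{|E(C_2)|}$, which is exponentially small unless $s/|C_2|$ is close to $0$ or $1$. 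Multiplying by $\binom{|C_2|}{s}\le 2^{|C_2|}$ and summing gives $2^{-\Omega(|C_2|)}$ in the interior range $s\in[\delta|C_2|,(1-\delta)|C_2|]$, while in the extreme ranges $\binom{|C_2|}{s}$ is itself subexponential. The single forced dependency $S=V(C_2)$ when $k$ is even (then $|e\cap S|=k$ is even) accounts for the $-\Ind(k\text{ even})$ in the basis size.

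The main obstacle is the boundary analysis for $m\ge m_k^*$ when $|S|$ is small but not bounded. Here the naive first-moment estimate must be combined with the minimum-degree-$2$ property of $C_2$ and the existence w.h.p.\ of a giant connected component of size $(1-o(1))|C_2|$ above $c_k^*$, to force such an $S$ to be essentially empty (or, for even $k$, essentially $V(C_2)$). Executing this boundary calculation carefully, together with an analogous treatment of the small-$t$ tail of the estimate for $m<m_k^*$, is the delicate core of the argument; the remainder is routine combination of the concentration of 2-core parameters with first-moment estimates in the configuration model.
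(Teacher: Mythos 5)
Your reduction to the 2-core is exactly the paper's opening move: peel degree-one vertices, observe each peel drops the rank by one, and conclude $\rank(\bA_{n,m;k})=m_1+\rank(\bC_2)$.  That part is correct and matches the paper.

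After that the routes diverge substantially.  For $m<m_k^*$ the paper does not run a first-moment argument: it invokes Theorem~2 of Pittel and Sorkin \cite{PS}, which is precisely the statement that a uniformly random $k$-uniform incidence matrix conditioned on minimum degree $2$ (i.e.\ the 2-core matrix) has rank $\approx$ its number of columns below the XORSAT threshold.  That theorem is itself a long and delicate piece of work; your sentence ``so the expected number of nonempty even sub-hypergraphs of $C_2$ is $2^{o(|E(C_2)|)}$'' is asserting, not proving, the content of Pittel--Sorkin.  Near $c_k^*$ the exponential rate in the first moment is exactly the quantity whose sign changes at the threshold, and showing it is subexponential for all $c<c_k^*$ requires the full analysis of that rate function over the degree-constrained configuration model.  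A sketch is fine, but you should flag that this step, as written, is the size of a whole cited paper rather than a routine estimate.

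For $m\ge m_k^*$ the paper does something quite different from your configuration-model union bound: it tracks the rank increment each time a new edge causes the 2-core to grow (the ``mantle'' argument), and uses Lemmas~\ref{simple}--\ref{S-L} about cacti, linear hypergraphs, and the scarcity of short cycles to show the increment is $(1-o(1))$ times the number of new vertices.  This is a local argument and never faces the small-$|S|$ boundary.  Your approach does face it, and here there is a genuine gap.  You correctly observe that $\binom{|C_2|}{s}\bigl(\tfrac12(1+(1-2s/|C_2|)^k)\bigr)^{|E(C_2)|}$ does not tend to zero for small $s$: for $s=O(1)$ the binomial is $n^{\Theta(s)}$ while the probability is only $e^{-\Theta(s)}$, so the union bound is useless there.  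You propose to rescue it with ``the minimum-degree-$2$ property of $C_2$ and the existence w.h.p.\ of a giant component,'' but this does not obviously close the gap.  For $k=2$ connectivity forces a row-dependency $S$ to be a union of components because $|e\cap S|$ even means $|e\cap S|\in\{0,2\}=\{0,k\}$; for $k\ge 4$, even intersections of size $2,4,\ldots,k-2$ are allowed, so $S$ need not respect components, and connectivity alone does not rule out small nontrivial $S$.  What you actually need is a refined first-moment bound that accounts for the structural constraint that every edge meeting $S$ must meet it in at least $2$ vertices, which forces many edges to concentrate on a small set; this is a real calculation that you have identified as ``the delicate core of the argument'' but not carried out.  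As it stands, the proof has a hole exactly where you say it does, and the paper's cactus/rank-growth machinery is the device that fills it.
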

Note that when $k=2$ we have $c_2^*=0$ and the theorem follows from the fact that an isolated tree with $t$ edges induces a sub-matrix of rank $t$ in $\bA_{n,m;k}$. We therefore concentrate on the case $k\geq 3$.

Using \eqref{sizeC2}, we can express Theorem \ref{th1} directly in terms of $c$ by
\begin{corollary}\label{cor1}
Suppose that $k\geq 3$ and $m=cn/k$. Then, w.h.p.
\beq{exp}{
\rank(\bA_{n,m;k})\approx \begin{cases}m&c< c_k^*.\\ m-m{x_c}^{k/(k-1)}+n(x_c^{1/(k-1)}-cx_c+cx_c^{k/(k-1)})&c\geq c_k^*.\end{cases}
}
\end{corollary}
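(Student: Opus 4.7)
The plan is to deduce Corollary \ref{cor1} directly from Theorem \ref{th1} by substituting the asymptotic formulas \eqref{sizeC2} and \eqref{sizeC2a}. Since $m = cn/k$ and $m_k^* = c_k^* n/k$, the threshold $m < m_k^*$ is identical to $c < c_k^*$. The first case of Theorem \ref{th1}, combined with $|E(H)| = m$ (which holds by construction, as the columns of $\bA_{n,m;k}$ are distinct), immediately gives $\rank(\bA_{n,m;k}) \approx m$ when $c < c_k^*$.

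For the second case, the key algebraic identity is
\[
m \cdot x_c^{k/(k-1)} = \frac{cn}{k} \cdot x_c^{k/(k-1)} = n \cdot \frac{c x_c^{k/(k-1)}}{k},
\]
which is exactly the main term of $|E(C_2)|$ given by \eqref{sizeC2a}. Substituting \eqref{sizeC2} and \eqref{sizeC2a} into $|E(H)| - |E(C_2)| + |C_2| = m - |E(C_2)| + |C_2|$ therefore yields the right-hand side of \eqref{exp}, up to additive errors of order $n^{3/4}$ from each of \eqref{sizeC2} and \eqref{sizeC2a}.

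The only subtlety is absorbing these $O(n^{3/4})$ errors into the $\approx$ (i.e.\ $(1+o(1))$) notation. For $c$ bounded away from $c_k^*$ from above, both $|C_2|$ and $|E(C_2)|$ are of order $n$, so the $n^{3/4}$ errors are negligible compared to the leading-order expression. At the threshold $c = c_k^*$ itself, $|C_2|$ and $|E(C_2)|$ are asymptotically equal by the defining equation \eqref{defck} of $c_k^*$, so their contributions cancel and both cases of the corollary reduce to $\approx m$, ensuring continuity across the transition. In all regimes the combined error is $o(n) = o(\rank(\bA_{n,m;k}))$, and the corollary follows. There is no genuine obstacle in this argument; the work is essentially bookkeeping to check that the error terms from \eqref{sizeC2}--\eqref{sizeC2a} are dominated by the main terms throughout the relevant range of $c$.
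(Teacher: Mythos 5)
Your proposal is correct and matches the paper's approach: the paper derives Corollary \ref{cor1} from Theorem \ref{th1} precisely by substituting the asymptotic expressions \eqref{sizeC2} and \eqref{sizeC2a} for $|C_2|$ and $|E(C_2)|$ and using $|E(H)|=m$, $m_k^*=c_k^* n/k$. Your observation that the $O(n^{3/4})$ errors are negligible since the leading terms are $\Theta(n)$ for $c\ge c_k^*$, and that the two branches agree at $c=c_k^*$ by \eqref{defck}, is the correct bookkeeping that the paper leaves implicit.
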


Around  $m=n( \log n+c_n)/k$  the remaining vertices of degree one in $H$ disappear, and $\bA_{n,m;k}$ has full rank up to parity, i.e., $\rank(\bA_{n,m;k})=n^*$ where
$$n^*=n-\Ind (k\text{ even}).$$
\begin{theorem}\label{th2}
Suppose that $k\geq 3$.
\begin{enumerate}[(i)]
\item Given a constant $A>0$, there exists $\g=\g(A)$ such that for $m\geq\g n\log n$,
$$\Pr(\rank(\bA_{n,m;k})<n^*)=o(n^{-A}).$$
\item If $m=n(\log n+c_n)/{k}$ then
$$\lim_{n\to \infty}\Pr(\rank(\bA_{n,m;k}=n^*))=\begin{cases}0&c_n\to-\infty\\
e^{-e^{-c}}&c_n\to c\\ 1 &c_n\to+\infty.\end{cases}$$
\end{enumerate}
\end{theorem}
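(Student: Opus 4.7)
The plan is to identify $\rank(\bA_{n,m;k})$ with $n$ minus the $GF(2)$-dimension of the space of \emph{even sets} of $H=H_{n,m;k}$, where $S\subseteq[n]$ is called \emph{even} if $|e\cap S|\equiv 0\pmod{2}$ for every edge $e\in E(H)$; this works because $\Ind_S\in\ker\bA_{n,m;k}^T$ iff $S$ is even. Writing $I$ for the set of isolated vertices of $H$, the \emph{trivially} even sets are the linear combinations of $\{\Ind_{\{v\}}:v\in I\}$ with $\Ind_{[n]}$ (the latter included only when $k$ is even); this span has dimension $|I|+\Ind(k\text{ even})$, so $\rank(\bA_{n,m;k})\leq n^*-|I|$, with equality iff $|I|=0$ and there is no \emph{non-trivial} even set---equivalently, no even $S$ with $S\not\subseteq I$ (and, when $k$ is even, also $[n]\setminus S\not\subseteq I$).

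For a fixed $s$-subset $S$, let $p_s:=\Pr[|e\cap S|\text{ even}]$ and $q_s:=\Pr[e\cap S=\emptyset]=\binom{n-s}{k}/\binom{n}{k}$ for a uniform $k$-subset $e$; the binomial approximation yields $p_s\approx\tfrac{1}{2}(1+(1-2s/n)^k)$, so $p_s=1-ks/n+O(s^2/n^2)$ for $s=o(n)$ and $p_s$ is bounded away from $1$ once $s/n$ is bounded below. Since $\{S\subseteq I\}$ is the event that all $m$ edges avoid $S$ (probability $q_s^m$) and is contained in $\{S\text{ even}\}$ (probability $p_s^m$), the expected number of non-trivial even sets is at most $\sum_{s=1}^{n^*-1}\binom{n}{s}(p_s^m-q_s^m)$ (with a factor $2$ from the complement symmetry when $k$ is even). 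Part (i) follows from a crude first moment: for $m\geq\g n\log n$, $\binom{n}{s}p_s^m\leq(en/s)^sn^{-k\g s(1+o(1))}\leq n^{-(k\g-1)s}$, summing to $O(n^{2-k\g})$, and $\E|I|=n(1-k/n)^m=O(n^{1-k\g})$; taking $\g>(A+2)/k$ makes both $o(n^{-A})$, hence $\Pr[\rank<n^*]=o(n^{-A})$ by Markov.

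For part (ii) at $m=n(\log n+c_n)/k$, two inputs are required. The Poisson limit $|I|\to\Pois(e^{-c})$ when $c_n\to c$ follows from the $j$th binomial moment $\E\binom{|I|}{j}=\binom{n}{j}q_j^m\to(e^{-c})^j/j!$, hence $\Pr[|I|=0]\to e^{-e^{-c}}$; the boundary cases $c_n\to\pm\infty$ follow by monotonicity. Separately, one must show $\sum_{s=1}^{n^*-1}\binom{n}{s}(p_s^m-q_s^m)=o(1)$. For bounded $s$, $p_s-q_s=\binom{s}{2}\binom{n-s}{k-2}/\binom{n}{k}+\text{(higher $j$ terms)}=O(s^2/n^2)$ combined with the mean-value bound $p_s^m-q_s^m\leq m(p_s-q_s)p_s^{m-1}$ yields per-term contribution $O(e^{-sc_n}s^2\log n/(n\cdot s!))$, summing to $O(\log n/n)$; for $s$ linear in $n$, $p_s$ is bounded below $1$ so $\binom{n}{s}p_s^m$ is super-exponentially small. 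Combining, $\Pr[\rank=n^*]=\Pr[|I|=0]-\Pr[|I|=0,\text{a non-trivial even set exists}]=\Pr[|I|=0]-o(1)$, yielding the stated limits.

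The principal technical obstacle is the intermediate range $\sqrt{n/\log n}\ll s\ll n$ in part (ii): here the small-$s$ linearization is no longer tight, yet $p_s$ is still too close to $1$ for the decay of $p_s^m$ to dominate $\binom{n}{s}$. I would handle this by splitting into three subranges and tightening to $\binom{n}{s}p_s^m\leq e^{-sc_n+O(s^2\log n/n)}/s!$ (using $\binom{n}{s}\leq n^s/s!$ and the second-order Taylor estimate $m\log p_s\leq-s\log n-sc_n+(k-2)s^2\log n/(2n)+O(1)$) for the intermediate subrange, where the factor $1/s!$ absorbs the positive correction $e^{O(s^2\log n/n)}$ uniformly once $s\to\infty$ with $s\leq\epsilon n$ ($\epsilon$ small, depending on $k$); the same bounds apply to $\binom{n}{s}q_s^m$, so that the two tail sums both vanish as $s_0\to\infty$ and their difference is $o(1)$ throughout.
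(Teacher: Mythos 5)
Your proposal is correct and follows essentially the same route as the paper: reduce to showing, via a first-moment bound, that w.h.p.\ there is no non-trivial even set (the paper's events $\cB_S$ with at least one ``Type B'' column), and couple this with the Poisson limit for the number of isolated vertices. The only substantive difference is in the bookkeeping for the first-moment sum: the paper works in the $\bA_{n,p;k}$ model and uses the optimized constant $\b_k$ to get a uniform lower bound $T_{s,k}\geq \b_k s\, n^{k-1}/(k-1)!\ooi$, which disposes of the large-$s$ range $s>4n^{1-\b_k}$ in one stroke and reserves the tighter bound $T_{s,k}\geq s\binom{n-s}{k-1}$ for $\log\log n\leq s\leq 4n^{1-\b_k}$; you instead split more finely and lean on the $1/s!$ factor to absorb the $e^{O(s^2\log n/n)}$ correction in the ``intermediate'' range, which also works (note in part (i) your choice $\g>(A+2)/k$ silently uses $1-p_s\gtrsim ks/n$ for all $s$, which fails for $s=\Th(n)$; you should instead use a uniform constant such as the paper's $\b_k$, so $\g$ will depend on $\b_k$ as well --- but since the theorem only asserts existence of $\g(A)$ this is harmless).
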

We can easily modify the proof of part (ii) of Theorem \ref{th2} to give the following hitting time version. Suppose that we randomly order the columns of $\bA_{n,M;k}$ where $M=\binom{n}{k}$. Let $\bM_m$ denote the matrix defined by the first $m$ columns in this order.
$$m_1=\min\set{m:\bM_m\text{ has $n^*$ non-zero rows}}\text{ and let }m^*=\min\set{m:\bM_m\text{ has rank $n^*$}}.$$
\begin{theorem}\label{th2a}
$m_1=m^*$ w.h.p.
\end{theorem}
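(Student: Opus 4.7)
\textbf{Proof plan for Theorem \ref{th2a}.} The inequality $m_1\le m^*$ holds deterministically: if $\rank(\bM_m)=n^*$ then every vertex must have positive degree. For $k$ odd this is automatic; for $k$ even, a single isolated vertex $v$ produces a second left-null vector $\mathbf 1+e_v$ beyond the parity vector $\mathbf 1$, dropping the rank to at most $n-2<n^*$. So it suffices to show $\Pr[\rank(\bM_{m_1})<n^*]=o(1)$.

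I would encode the rank deficit via the subspace $\mathcal B_m=\{S\subseteq[n]:|e\cap S|\text{ is even for every }e\in H_m\}\subseteq\mathrm{GF}(2)^n$, which satisfies $\dim\mathcal B_m=n-\rank(\bM_m)$ and is non-increasing in $m$. Its trivial elements are $\emptyset$ (always) and $[n]$ (when $k$ is even). At time $m_1$ no vertex is isolated, so $\mathcal B_{m_1}$ contains no singleton; hence $\rank(\bM_{m_1})<n^*$ iff $\mathcal B_{m_1}$ contains some $S$ with $2\le|S|$ and (for $k$ even) $|S|\le n-2$. The Poisson limit for the number of isolated vertices places $m_1$ in the window $[M_1,M_2]:=[n(\log n-C)/k,\,n(\log n+C)/k]$ w.h.p.\ for large $C$, so the task reduces to bounding, for each such $s$, the expected number of $S$ with $|S|=s$ that lie in $\mathcal B_{m_1}$.

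For each $s$ I use the monotonicity $\mathcal B_{m_1}\subseteq\mathcal B_{M_1}$ (from $m_1\ge M_1$): if $S$ is bad for $\bM_{m_1}$, then $S$ is bad for $\bM_{M_1}$, and additionally every $v\in S$ has positive degree at $m_1$. Writing $p_s=\sum_{j\text{ even}}\binom{s}{j}\binom{n-s}{k-j}/\binom{n}{k}$ and $q_s=\binom{n-s}{k}/\binom{n}{k}$ for the probabilities that a uniform edge has even intersection with $S$ and misses $S$ respectively, I would split on whether the vertices of $S$ are already non-isolated at $M_1$: (a) if so, a direct first-moment computation gives $\binom{n}{s}(p_s^{M_1}-q_s^{M_1})=o(1)$ (for $s=2$ it evaluates to $O(e^{2C}\log n/n)$; for $s\ge 3$ the bound is easier), following the standard calculation underlying Theorem \ref{th2}(ii); (b) if some vertex of $S$ is still isolated at $M_1$, then the edges in $(M_1,m_1]$ must bring $S$ to non-isolated status via an edge whose intersection with $S$ is even and nonzero, without first encountering an edge of odd intersection---a first-passage event of probability $O(s/n)$, which beats the $\binom{n}{s}q_s^{M_1}$ factor and again yields $o(1)$. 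Summing over $s\in\{2,\dots,\lfloor n/2\rfloor\}$ (with complementation $S\leftrightarrow[n]\setminus S$ for $k$ even) and letting $C\to\infty$ yields $\Pr[m_1<m^*]\to 0$.

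The main obstacle is case (b): the naive per-$m$ Markov bound $\binom{n}{s}p_s^m$ does not tolerate a union over the $\Theta(n)$ values of $m$ in the window $[M_1,M_2]$, and for $S$ whose vertices are isolated at $M_1$ one must condition on the sequence of intersection sizes $|e_j\cap S|$ beyond $M_1$ and extract the needed $O(1/n)$ saving from a first-passage argument on that sequence.
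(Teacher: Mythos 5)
Your setup is sound: the deterministic inequality $m_1\leq m^*$, the left-null-space reformulation, and the reduction to $\Pr(\rank(\bM_{m_1})<n^*)=o(1)$ are all correct, and your case (a) is essentially the first-moment computation already used to prove Theorem~\ref{th2}(ii). The gap is exactly where you flag it: case (b) is a sketch, not an argument. The difficulty is not only the union over $\Theta(n)$ values of $m$; it is that the edge set in $(M_1,m_1]$ is a stopping-time-dependent window, so for a bad set $S$ with $S\cap I\neq\emptyset$ (where $I$ is the set of vertices isolated at time $M_1$) you must simultaneously control (i) every edge after $M_1$ meeting $S$ evenly, (ii) every vertex of $S\cap I$ being covered by such an edge by time $m_1$, and (iii) the law of $m_1$ itself given this conditioning. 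Your $O(s/n)$ first-passage heuristic only addresses the extreme subcase $S\subseteq I$ (where the $\binom{n}{s}q_s^{M_1}$ prefactor applies); the mixed case where $S$ has both isolated and non-isolated vertices at $M_1$ is not covered, and the interaction with the randomness of $m_1$ is unresolved. As written, the plan does not yet constitute a proof.

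The paper takes a genuinely different and much shorter route that avoids all of this. Fix the deterministic time $m_0=n(\log n-\log\log n)/k$, at which w.h.p.\ isolated vertices still exist, so $m_0<m_1$. Re-running the estimates \eqref{bigS} and \eqref{2n} at this slightly smaller $m$ shows that w.h.p.\ the only obstruction to rank $n^*$ at time $m_0$ is the zero rows, i.e.\ $\rank(\bM_{m_0})=n^*-Z_1$, and moreover $\E(Z_1)=O(\log n)$ so $Z_1\leq \log^2 n$ w.h.p. Now track the invariant forward: when a new column is added, it raises $\rank$ by exactly one if it hits exactly one currently-isolated vertex, raises $n^*-Z_1$ by the same amount, and leaves both unchanged if it hits none; the identity $\rank(\bM_m)=n^*-Z_1(m)$ can only break if a column hits two or more isolated vertices at once, an event of probability $O\bigl((\log^2 n/n)^2\bigr)$ per step. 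Summing over the $O(n\log n)$ remaining steps gives $o(1)$, so w.h.p.\ the rank reaches $n^*$ precisely when $Z_1$ reaches zero, which is $m_1$. This invariant-tracking idea replaces your delicate first-passage estimate at a random time with a one-line union bound over a deterministic number of steps; if you want to repair your proof, adopting it is the cleanest fix.
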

Some time after completion of this manuscript, we learnt from Amin Coja-Oghlan that he has an independent proof of Theorem \ref{th1}.
\subsection{Minimum Weight Basis}
The expression \eqref{exp} enables us to estimate the expected optimal value to the minimum weight basis problem defined above. Suppose that we assign i.i.d. $U[0,1]$ weights $X_\bc,\bc\in \Omega_{n,k}$  to the $|\Omega_{n,k}|=\binom{n}{k}$ distinct vectors with exactly $k$  unit entries, all other entries being zeroes. The weight of a set of columns $C$ is $X(C)=\sum_{\bc\in C}X_\bc$. Let $W_{n,k}$ be the minimum weight of any basis  of $n^*=n-\Ind (k\text{ even})$ linearly independent columns, chosen from the $\binom{n}{k}$ column vectors $\bc\in \Omega_{n,k}$.
Define the random matrix $\bA_{n,p;k}$  to consist of the vectors $\bc \in \Omega_{n,k}$ with weight $X_{\bc}$ at most $p$.

We show in Section \ref{mwb} below that if $W_{n,k}$ denotes the weight of a minimum weight basis then
\beq{Int1}{
\E(W_{n,k})=\int_{p=0}^1(n^*-\E(\rank(\bA_{n,p;k})))dp.
}

Corollary \ref{cor1} and Theorem \ref{th2} can be substituted into \eqref{Int1} to yield an asymptotic formula for $W_{m,k}$.
\begin{theorem}\label{WTF}
 Let $x=x(c)$ be the largest solution of $x=(1-e^{-cx})^{k-1}$ in $(0,1]$, then
 \begin{equation}
 \frac{n^{k-2}}{(k-1)!}\E(W_{n,k})\approx c_k^*\brac{1-\frac{c_k^*}{2k}}+
 \int_{c_k^*}^{\infty} \brac{ e^{-cx}\brac{1+\frac{(k-1)cx}{k}}-\frac{c}{k}(1-x)}dc
 \label{opaque}
 \end{equation}
\end{theorem}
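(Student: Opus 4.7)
The plan is to start from the integral representation \eqref{Int1} and change variables from $p$ to $c$ via $m = p\binom{n}{k} = cn/k$, so $dp = \frac{n}{k\binom{n}{k}}\,dc$. Since $\bA_{n,p;k}$ has a $\Bin(\binom{n}{k},p)$ number of columns while $\bA_{n,m;k}$ has $m$ columns deterministically, a Chernoff bound lets me replace $\E(\rank(\bA_{n,p;k}))$ by $\E(\rank(\bA_{n,m;k}))$ at $m = p\binom{n}{k}$ up to a negligible error in the final expression. I would then split the $c$-integral into three pieces: $[0, c_k^*]$, $[c_k^*, k\g\log n]$, and $[k\g\log n, c_{\max}]$, where $c_{\max}$ corresponds to $p=1$ and $\g$ is as in Theorem \ref{th2}(i).

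On $[0, c_k^*]$, Corollary \ref{cor1} together with the deterministic bound $\rank \le m$ gives $\E(\rank(\bA_{n,m;k})) \approx m$. A direct calculation using $\binom{n}{k} \approx n^k/k!$ then yields
\[
\frac{n^{k-2}}{(k-1)!}\int_0^{p_k^*}\brac{n^* - p\binom{n}{k}}\,dp \approx c_k^*\brac{1 - \frac{c_k^*}{2k}},
\]
which is the first term of \eqref{opaque}. On $[k\g\log n, c_{\max}]$, Theorem \ref{th2}(i) yields $\Pr(\rank < n^*) = o(n^{-A})$; combined with the crude bound $n^* - \rank \le n$, this contribution, after multiplication by $n^{k-2}/(k-1)!$, is $o(1)$ for $A$ sufficiently large.

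The main contribution comes from the middle range. Applying Corollary \ref{cor1} with $n^* \approx n$ and $m = cn/k$ yields
\[
n^* - \rank(\bA_{n,m;k}) \approx n\brac{1 - \frac{c}{k} - x_c^{1/(k-1)} - \frac{(k-1)c\,x_c^{k/(k-1)}}{k} + cx_c}.
\]
Using \eqref{cstar} in the form $e^{-cx_c} = 1 - x_c^{1/(k-1)}$ together with the identity $x_c^{k/(k-1)} = x_c(1 - e^{-cx_c})$, the bracket simplifies to
\[
e^{-cx_c}\brac{1 + \frac{(k-1)cx_c}{k}} - \frac{c}{k}(1 - x_c).
\]
Multiplying by $dp \approx \frac{(k-1)!}{n^{k-1}}\,dc$ and rescaling by $n^{k-2}/(k-1)!$ produces exactly the integrand in the second term of \eqref{opaque}. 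The upper limit can be extended from $k\g\log n$ to $\infty$ at negligible cost, since $1 - x_c \sim (k-1)e^{-c}$ as $c \to \infty$, so the integrand decays like $e^{-c}$ times a polynomial in $c$.

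The main obstacle is upgrading the w.h.p. statements of Corollary \ref{cor1} and Theorem \ref{th2} to the bounds on $\E(\rank)$ required to integrate uniformly in $c$ over a range of length $O(\log n)$. This should be handled by combining the quantitative error bounds in \eqref{sizeC2}--\eqref{sizeC2a} with the deterministic sandwich $0 \le \rank \le \min(n^*, m)$; the latter absorbs the contribution from the $o(n^{-C})$-probability bad events inside $\E(\cdot)$, so that the w.h.p. asymptotic relations become asymptotic identities for the expectations with sufficiently small error to survive the integration and final rescaling by $n^{k-2}/(k-1)!$.
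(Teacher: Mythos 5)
Your overall structure (the integral representation \eqref{Int1}, the change of variables $p \mapsto c$, the split into low/middle/high ranges of $c$, and the algebraic simplification of the integrand via \eqref{cstar}) matches the paper's, and the algebra checks out: the bracket you obtain is exactly the integrand of \eqref{opaque}. However, there is a genuine gap in the middle range.

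You apply Corollary \ref{cor1} over all of $[c_k^*, k\g\log n]$, but that corollary is a consequence of Theorem \ref{th1}, which is proved only for $m=O(n)$, i.e.\ $c=O(1)$. The paper flags this explicitly before treating $I_3$. Your proposed patch --- combining \eqref{sizeC2}--\eqref{sizeC2a} with the deterministic sandwich $0\le \rank \le \min(n^*,m)$ --- does not repair the hole, because the error in the core-rank estimate \eqref{end} is \emph{multiplicative}: $\rank(\bC_2)\ge (1-o(1))|C_2|$ with $|C_2|\approx n$. That gives an additive uncertainty $o(n)$ in $\rank(\bA_p)$, uniformly in $c$. But the integrand you actually need to control, $n^* - \E(\rank(\bA_p))$, shrinks like $n e^{-\Omega(c)}$, which for large constant $c$ is already a small fraction of $n$ and for $c\to\infty$ is $o(n)$; the uncontrolled $o(n)$ error swamps it and the integration over a range of length $\Theta(\log n)$ does not converge to anything meaningful on that basis.

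The paper resolves this with a separate argument that you will need. It introduces a large constant cutoff $c_\e=2\log(1/\e)$, uses Corollary \ref{cor1} only on $[c_k^*, c_\e]$ (so $c=O(1)$ and Theorem \ref{th1} applies), and then bounds $I_3=\int_{c_\e}^{k\g\log n}$ directly by a coupling/martingale argument: at time $t$ past $m_\e$, letting $Y_t = n^* - \rank(\bC^*(t))$, each new column has probability $\ge kY_t/(2n)$ of hitting exactly one not-yet-spanned row and increasing the rank, so $\E(Y_{t+1}\mid Y_t)\le (1-k/2n)Y_t$; blocking this in steps of $h=n^{1/2}$ and applying Hoeffding gives q.s.\ geometric decay, whence $I_3 = O(n^{9/10}) + 3u_0/k$ with $u_0 = Y_0 \le 2\e^2 n$. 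Letting $\e\to 0$ completes the argument. Without something equivalent --- a quantitative, $c$-uniform statement that $n^*-\E(\rank(\bA_p))$ decays exponentially in $c$ once $c$ is a large constant --- your middle-range step does not go through.
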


We note the remarkable fact that, by  the result of Frieze \cite{F1},
for $k=2$ and with $c_2^*=0$,
the expression in \eqref{opaque} must equal $\zeta(3)$.  We have numerically estimated the first few values  as a function of  $k$:

\vspace{-0.1in}
\begin{center}
  \renewcommand{\arraystretch}{1.5}
\begin{tabular}{c|c|c|c|c|c|c|c|c|c}
$k$&2&3&4&5&6&7&8&9&10\\
\hline
$\frac{n^{k-2}}{(k-1)!}\E(W_{n,k})$&$\z(3)\approx 1.202$&1.563&2.021&2.507&3.003&3.501&4.000&4.500&5.000\\
\end{tabular}
\end{center}
It appears the values are getting close to $k/2$ as $k$ grows, and  this is indeed the case.

\begin{theorem}\label{th3}
For $k \ge 3$, and some $\e_k$, $|\e_k| \le 5$,
\beq{largek}{
\lim_{n \rai}\frac{n^{k-2}}{(k-1)!}\E(W_{n,k})= \frac{k}{2}\brac{1+\e_k e^{-k}}.}
\end{theorem}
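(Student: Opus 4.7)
My plan is to evaluate both terms in the closed form of Theorem~\ref{WTF} asymptotically as $k\to\infty$. The key reduction is the substitution $u := x^{1/(k-1)}$, equivalently $y := cx = -\ln(1-u)$, which converts the implicit equation $x=(1-e^{-cx})^{k-1}$ into the closed form $c = -\ln(1-u)/u^{k-1}$ and makes the 2-core conditions fully explicit.

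Setting $\d := 1 - u(c_k^*)$, the defining identity \eqref{defck} of $c_k^*$ reduces (under the substitution) to $k(1-\d) = -\ln(\d)\bigl(1+(k-1)\d\bigr)$. A bootstrap starting from the ansatz $\d = e^{-k}$ yields $\d = e^{-k}\bigl(1+O(k^2 e^{-k})\bigr)$, and substituting back gives $c_k^* = k(1-e^{-k}) + O(k^3 e^{-2k})$. Direct computation then produces
\[
c_k^*\Bigl(1-\frac{c_k^*}{2k}\Bigr) = \tfrac{k}{2}(1-e^{-2k}) + O(k^3 e^{-2k}) = \tfrac{k}{2} + O(k^3 e^{-2k}),
\]
so the boundary term already equals $k/2$ up to an $O(k^3 e^{-2k})$ error.

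The harder step is to bound the tail integral
\[
I_k := \int_{c_k^*}^\infty \bigl[e^{-cx}(1+(k-1)cx/k) - c(1-x)/k\bigr]\,dc.
\]
Changing variables to $y = cx$, one has $x = (1-e^{-y})^{k-1}$, $c = y/(1-e^{-y})^{k-1}$, and $y$ ranges over $[y^*,\infty)$ with $y^* = k - k^2 e^{-k} + O(k^3 e^{-2k})$. In this range $(k-1)e^{-y}$ is exponentially small, so the binomial expansion $(1-e^{-y})^{-(k-1)} = 1 + (k-1)e^{-y} + \binom{k}{2}e^{-2y} + O(k^3 e^{-3y})$ converges uniformly. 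Inserting this into both the integrand and the Jacobian $dc/dy = (1-e^{-y})^{-(k-1)}\bigl(1 - y(k-1)e^{-y}/(1-e^{-y})\bigr)$, the terms of order $(k-1)ye^{-y}/k$ from the two subtracted pieces cancel identically, and the product reduces to $e^{-y} + O(ky e^{-2y})$. Integration from $y^*$ to $\infty$ yields $I_k = e^{-k}\bigl(1 + O(k^2 e^{-k})\bigr)$.

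Summing the two contributions,
\[
\frac{n^{k-2}}{(k-1)!}\E(W_{n,k}) \approx \tfrac{k}{2} + e^{-k} + O(k^3 e^{-2k}) = \tfrac{k}{2}\bigl(1 + \e_k e^{-k}\bigr)
\]
with $\e_k = 2/k + O(k^2 e^{-k})$, so $|\e_k|\leq 1$ for all sufficiently large $k$; the tabulated values $k=3,\dots,10$ preceding the statement give $|\e_k|\leq 1$ in those cases as well, yielding $|\e_k|\leq 5$ throughout. The main obstacle is the tail integral: the two subtracted pieces are each of order $ke^{-k}$ near $c=c_k^*$ and agree to leading order, so the residual $O(e^{-k})$ has to be extracted through a careful cancellation—passing to the variable $y=cx$ is what makes this transparent, because $e^{-y} = 1-u$ directly controls the smallness of both pieces.
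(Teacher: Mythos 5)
Your route differs from the paper's in a meaningful way, and the extra information it extracts is worth noting. The paper (Lemma~\ref{cvalu} and the calculations following \eqref{I1a}) works with explicit two-sided inequalities --- $k(1-\tfrac{3}{2}e^{-k}) \le c_k^* \le k$, $x \ge 1-\tfrac{3k}{2}e^{-c}$ --- and then bounds $I_1,I_2$ term by term, ending with $n\tfrac{k}{2}\bigl(1-\tfrac94 e^{-2k}-\tfrac{21}{10}e^{-k}\bigr) \le I_1 + I_2 \le n\tfrac{k}{2}\bigl(1 + 3e^{1/2}e^{-k}\bigr)$, whence $|\e_k|\le 5$ for all $k\ge 4$ (with $k=3$ handled via the numerical value $c_3^*=2.753699$). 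Your change of variables $y=cx$ turns the tail integral into $\int_{y^*}^\infty \bigl(e^{-y}+O(kye^{-2y})\bigr)\,dy$ after the $(k-1)ye^{-y}/k$ pieces cancel exactly, yielding the strictly sharper asymptotic $\e_k = 2/k + O(ke^{-k})$, i.e.\ $\e_k\to 0$, which the paper's crude inequality chain does not see. I have verified the bootstrap for $\delta$, the ensuing $c_k^* = k(1-e^{-k})+O(k^3 e^{-2k})$, and the cancellation; they are correct (the coefficient of $ye^{-2y}$ in the remainder is in fact $-(k-1)/2$, but this does not affect your $O$-bound).

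The genuine gap is the very last step: you conclude $|\e_k|\le 1$ only ``for all sufficiently large $k$'' without quantifying the threshold, and then appeal to the numerical table --- which covers only $k\le 10$ and is itself described in the paper as a numerical estimate, not a proof. For the (unspecified, possibly nonempty) range of $k$ between the last tabulated value and the point where your asymptotic estimates take effect, the claim $|\e_k|\le 5$ is simply not established. To close the gap you would need to make the constants implicit in $\delta = e^{-k}(1+O(k^2 e^{-k}))$, in $y^* = k - k^2 e^{-k} + O(\cdot)$, and in the $O(kye^{-2y})$ remainder explicit, and then verify that the resulting bound on $\e_k$ stays below $5$ all the way down to $k=3$. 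That explicit bookkeeping is precisely what the paper's less elegant but effective inequalities supply.
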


\section{Matrix Rank}\label{MR}

We study the random matrix $\bA_m$ distributed as $\bA_{n,m;k}$, with corresponding hypergraph $H_m$ distributed as $H_{n,m;k}$.
We let $c=km/n$.

The first step of our proof is to ``peel off'' edges of the hypergraph $H_m$, and thus columns of the matrix $\bA_m$, containing vertices of degree $1$.

In particular, we set $H_m:=H_m$, and then, recursively, so long as $H_i$ contains a vertex $x_i$ of degree $1$, then for the edge $e_i\ni x_i$ in $H_i$, we set
\begin{align*}
  E(H_{i-1})&=E(H_{i})\setminus \{e_i\}\\
  V(H_{i-1})&=V(H_i)\setminus\{x\in e_i\mid \deg_{H_i}(x)=1\}.
\end{align*}

In a corresponding sequence $\{\bA_i\}$ beginning from $\bA_m$, we obtain $\bA_{i-1}$ from $\bA_i$ by removing the column $c_i$ corresponding to $e_i$, and the (at least one) rows whose only $1$s were in that column.  Note that for all $i<m$ for which $\bA_i$ is defined, we have
\[
\rank(\bA_i)=\rank(\bA_{i+1})-1.
\]
This recursion terminates at
\begin{equation}\label{c2}
  \bC_2=\bA_{m_2},
\end{equation}
where $m_2$ is the number of edges in the the 2-core of the hypergraph $H$, and moreover, we have that $H_{m_2}$ is precisely the 2-core of $H$.  Thus we have that
\begin{equation}
\rank(\bA_m)=m_1+\rank(\bC_2).
\end{equation}

In particular, we consider cases which control the behavior of the rank of the 2-core $\bC_2=H_{m_2}$ of $H$.\\
\textbf{Case 1:} $c<c_k^*$.\\
In this case we appeal to a result of Pittel and Sorkin \cite{PS}.  In particular, the columns associated with the 2-core $\bC_2$ are distributed as uniformly random, subject to each vertex/row of the 2-core being in at least two columns. It follows from Theorem 2 of Pittel and Sorkin \cite{PS} that the rank of the columns $\bc_{m_1+1},\bc_{m_1+2},\ldots,\bc_m$ is $\approx m_2=m-m_1$. (The Theorem 2 in \cite{PS} is stated in transpose to the formulation given here.)

For this case the first claim of \eqref{exp}, and Theorem \ref{th1}, have been verified.

{\bf Case 2: $c\geq c_k^*$.} \\
To prove Theorem \ref{th1} for $c\geq c_k^*$ we only need to verify that w.h.p.
\beq{CC2}{
\rank(\bC_2)\approx |V(C_2)|.
}

In this case we need some basic facts about hypergraphs. We say a hypergraph $H$ is {\em linear} if edges only intersect in at most one vertex. We define a $k$-uniform {\em cactus} as follows. A single edge is a cactus. An $(\ell+1)$-edge  cactus $C'$ is the structure  obtained from an $\ell$-edge cactus $C$ with vertex set $V(C),|V(C)|=(k-1)\ell+1$ as follows. Choose $x\in V(C)$ and let $V(C')= V(C) \cup \set{v_1,...v_{k-1}}$ where $\set{v_1,...v_{k-1}}$ is disjoint from $V(C)$.
The edge set $E(C')$ of $C'$ is $E(C) \cup \set{e'}$ where $e'=\set{x,v_1,...v_{k-1}}$. We need the following simple lemma.
\begin{lemma}\label{simple}
A connected $k$-uniform simple hypergraph $C$ with no cycles is a cactus.
\end{lemma}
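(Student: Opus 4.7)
The plan is to prove this by an inductive build-up: order the edges of $C$ so that each new edge shares exactly one vertex with the union of the previous edges and contributes $k-1$ brand-new vertices, matching the recursive cactus definition verbatim. The base case $|E(C)|=1$ is immediate. For the inductive step I would work on the \emph{line graph} $L(C)$, whose nodes are the edges of $C$ and in which two nodes are adjacent whenever the corresponding hyperedges share at least one vertex. Since $C$ is connected, so is $L(C)$, so I can pick a spanning tree of $L(C)$, root it, and enumerate the hyperedges in BFS (or DFS) order as $e_1,e_2,\ldots,e_\ell$. By construction each $e_i$ with $i\ge 2$ has a "parent" $e_{p(i)}$ with $p(i)<i$ that shares at least one vertex with $e_i$, so the sub-hypergraph $C_i$ induced by $e_1,\ldots,e_i$ is connected for every $i$, and $V_{i-1}:=\bigcup_{j<i}e_j$ always meets $e_i$.

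The heart of the argument is the following claim: for every $i\ge 2$, the intersection $e_i\cap V_{i-1}$ is a \emph{single} vertex. Suppose instead that $u\neq v$ both lie in $e_i\cap V_{i-1}$. If some single previous edge $e_j$ ($j<i$) contains both $u$ and $v$, then $e_j$ and $e_i$ share two vertices, which already violates simplicity (equivalently, it is a Berge cycle of length two). Otherwise $u\in e_{j_1}$ and $v\in e_{j_2}$ for distinct $j_1,j_2<i$; because $C_{i-1}$ is connected, there is a shortest Berge path from $u$ to $v$ inside $C_{i-1}$, consisting of distinct vertices and distinct edges, and appending $e_i$ (which itself contains both endpoints $u$ and $v$) closes this path into a genuine Berge cycle in $C$. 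Either outcome contradicts the hypothesis that $C$ has no cycles, proving the claim.

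Given the claim, each $e_i$ adds exactly $k-1$ vertices disjoint from $V_{i-1}$, so inductively $C_i$ is a cactus obtained from $C_{i-1}$ by the construction in the paper's definition (attach $e_i$ at the unique shared vertex $x\in V_{i-1}$, throwing in the $k-1$ new vertices). After $\ell$ steps we get $C_\ell=C$ presented as an $\ell$-edge cactus, finishing the proof. A convenient byproduct is the identity $|V(C)|=(k-1)\ell+1$, which will be useful later in counting arguments about the 2-core.

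The only delicate point — and the one I expect to spend the most care on — is the "shortest Berge path" step: I must verify that a minimum-length Berge walk between two vertices in a connected hypergraph actually has distinct edges and distinct internal vertices, so that closing it with $e_i$ yields a \emph{bona fide} Berge cycle rather than a degenerate backtracking walk. This is routine (any repetition in a shortest walk can be excised to give a shorter walk), but it is where the formal hypergraph-cycle conventions have to be pinned down; everything else is bookkeeping dictated by the cactus definition.
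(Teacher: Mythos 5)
Your proof is correct, but it runs in the opposite direction from the paper's. You build $C$ up one edge at a time, choosing a BFS order from a spanning tree of the line graph, and prove the key claim that each new edge $e_i$ meets the union $V_{i-1}$ of earlier vertices in exactly one vertex — otherwise you exhibit a Berge cycle, either from two edges sharing two vertices or by closing a shortest Berge path through $e_i$. This directly realizes the recursive cactus definition at every step. The paper instead peels top-down: take a longest path of edges, argue its terminal edge $e$ meets the rest of $C$ in exactly one vertex (else the path could be extended, or a cycle would appear), delete $e$, and induct on the smaller cactus. The two arguments are essentially dual; both reduce to the same structural fact that in a connected acyclic linear hypergraph some edge has exactly one non-pendant vertex. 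What your version buys is a more explicit identification of the contradiction (you name the Berge cycle), and it constructs the cactus ordering directly rather than recovering it from repeated deletions; what the paper's buys is brevity, since it never introduces the line graph. One small caveat: you attribute the case where a single earlier edge contains both $u$ and $v$ to a violation of ``simplicity,'' but in the usual hypergraph sense ``simple'' means no repeated edges; what is actually being used (and what the paper invokes via \emph{linear} in Lemma~\ref{key}) is the absence of a length-two Berge cycle, which is already subsumed by the ``no cycles'' hypothesis. You do note the equivalence, so the step is sound, but it is cleaner to cite acyclicity rather than simplicity there.
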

\begin{proof}
This can easily be verified by induction. We simply remove one terminal edge $e=\set{v_1,v_2,\ldots,v_k}$ of a longest path $P$. We can assume here that $v_2,\ldots,v_k$ are all of degree one, else $P$ can be extended. Deleting $e$ gives a new connected hypergraph $C'$ which is a cactus by induction.
\end{proof}
For a $k$-uniform linear hypergraph $H$ let $L(H)=(k-1)|E(H)|+1$.
\begin{lemma}\label{key}
Let $H$ be a connected $k$-uniform linear hypergraph.
\begin{enumerate}[(a)]
\item \label{p.inequality} $|V(H)| \le L(H)$.
\item \label{p.equality} $|V(H)|=L(H)$ if and only if $H$ does not contain any cycles.
\item \label{p.deleting} By deleting at most $L(H)-|V(H)|$ edges we can create a subgraph $H'$ with $V(H')=V(H)$ and no cycles.
\end{enumerate}
\end{lemma}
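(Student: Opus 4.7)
The plan is to prove all three parts in sequence using a single BFS-style edge ordering. Since $H$ is connected, I can order the edges $e_1,\ldots,e_r$ (with $r=|E(H)|$) so that for every $i\ge 2$ the edge $e_i$ meets $V_{i-1}:=\bigcup_{j<i}e_j$ in at least one vertex; this is just a BFS on edge-adjacency. Set $V_i=\bigcup_{j\le i}e_j$.

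For part~(\ref{p.inequality}), $|e_i\cap V_{i-1}|\ge 1$ gives $|V_i|-|V_{i-1}|\le k-1$, so telescoping from $|V_1|=k$ yields $|V(H)|\le k+(r-1)(k-1)=L(H)$. The same telescoping also delivers the identity
\[
L(H)-|V(H)| \;=\; \sum_{i=2}^{r}\bigl(|e_i\cap V_{i-1}|-1\bigr),
\]
which I will recycle for part~(\ref{p.deleting}).

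For part~(\ref{p.equality}), equality in~(\ref{p.inequality}) forces $|e_i\cap V_{i-1}|=1$ for every $i\ge 2$, which is exactly the inductive recipe defining a cactus; and a cactus is cycle-free by a trivial induction, since a new edge meeting the existing structure in a single vertex cannot close a cycle (which would require at least two distinct intersection vertices with the existing structure). Conversely, if $H$ has no cycles, Lemma~\ref{simple} says $H$ is a cactus, and the recursive cactus definition yields $|V(H)|=(k-1)|E(H)|+1=L(H)$.

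For part~(\ref{p.deleting}) I would apply a greedy ``cactus-growth'' procedure in the same order. Initialize $E'=\{e_1\}$, $W=e_1$; for $i\ge 2$, retain $e_i$ into $E'$ (and absorb its new vertices into $W$) iff $|e_i\cap W|\le 1$, otherwise discard. Define $H':=(V(H),E')$, in which any vertex not covered by $E'$ is left isolated, so $V(H')=V(H)$. The sub-hypergraph $H'$ is cycle-free: if $f_1,\ldots,f_t$ were a cycle in $H'$, let $f_t$ be the most recently retained; at that moment the two cycle-intersection vertices $v_{t-1}\in f_t\cap f_{t-1}$ and $v_t\in f_t\cap f_1$ were already in $W$, so $|f_t\cap W|\ge 2$ and $f_t$ would have been discarded. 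Finally a discarded $e_i$ satisfies $|e_i\cap V_{i-1}|\ge|e_i\cap W|\ge 2$, whence
\[
\#\{\text{discards}\}\;\le\;\sum_{\text{discarded }i}\bigl(|e_i\cap V_{i-1}|-1\bigr)\;\le\;\sum_{i=2}^{r}\bigl(|e_i\cap V_{i-1}|-1\bigr)\;=\;L(H)-|V(H)|.
\]

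The main subtlety lies in part~(\ref{p.deleting}): naively removing an edge from a cycle can strip vertices out of the hypergraph, so one cannot simply break cycles one at a time. The fix is the greedy cactus-growth rule processed in the BFS order, which both preserves $V(H)$ (with isolated vertices, if necessary) and exploits the identity from part~(\ref{p.inequality}) to control the number of deletions by the excess $L(H)-|V(H)|$.
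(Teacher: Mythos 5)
Your proposal is correct and takes a genuinely different route from the paper.  The paper argues by a case split: if $H$ is cycle-free it strips terminal edges of a longest path and inducts; if $H$ has a cycle it deletes one edge of the cycle, observes that this produces at most $k-1$ components (with isolated vertices counted), shows $\sum_i L(H_i)\le L(H)-1$ while $\sum_i|V(H_i)|=|V(H)|$, and applies the lemma inductively to each component.  You instead fix a single BFS edge-ordering $e_1,\dots,e_r$ of the connected hypergraph, obtain the exact identity
\[
L(H)-|V(H)|=\sum_{i=2}^{r}\bigl(|e_i\cap V_{i-1}|-1\bigr),
\]
and read all three parts directly off it: part~(a) is the nonnegativity of the summands; part~(b) is the characterisation of when every summand is zero (which, combined with Lemma~\ref{simple} for the converse, matches the cactus picture); and part~(c) is a greedy one-pass cactus-growth procedure whose discard count is dominated termwise by the same sum.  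The two proofs trade off differently: the paper's component-wise induction is shorter to state but relies on carefully accounting for isolated vertices after edge deletion, while your ordering-based argument is more explicit, gives a constructive algorithm for part~(c), and makes the quantity $L(H)-|V(H)|$ transparent as an accumulated ``excess overlap.''  You also correctly flag and handle the subtlety that naive cycle-breaking can drop vertices; your fix (keeping $V(H')=V(H)$ by definition and only controlling the edge set) is clean and sound.
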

\begin{proof}

We consider two cases:\\
 \textbf{Case 1:} $H$ contains no cycles.\\
In this case, we consider a longest path of edges in $H$; that is consider a longest sequence $e_1,e_2,\dots,e_\ell$ such that for each $1<i<e_\ell$, $e_i$ intersects $e_{i-1}$, $e_{i+1}$, and no other edges in the sequence.  Since the path is longest and $H$ has no cycles, we know that $e_\ell$ intersects no edge in $H$ other than $e_{\ell-1}$.

In particular, we define a hypergraph $H'$ with $E(H')=E(H)\setminus \{e_\ell\}$ and $V(H')=V(H)\setminus (e_\ell\setminus e_{\ell-1}).$  $H'$ has one fewer edge and $k-1$ fewer vertices than $H$, so we have $L(H)=|V(H)|$ by induction, proving the Lemma for this case.\\
\textbf{Case 2:} $H$ contains a cycle $C$.\\
In this case, we consider an edge $e$ in a cycle $C$ of $H$.  Removing the edge $e$ leaves a hypergraph on the same vertex set with one fewer edge and with at most $k-1$ connected components (counting isolated vertices as connected components).  Applying the Lemma inductively to each component, we see that the sum of $L(H_i)$ over the $(k-1)$ components $H_i$ of $H\setminus e$ satisfies
  \[
  \sum_{i=1}^{k-1} L(H_i)\leq L(H)-(k-1)+(k-2)\leq L(H)-1,
  \]
  since removing $e$ decreases the sum by $k-1$, while the additive term in the definition of $L(H)$ inflates the sum by at most $(k-2)$ (as the number of components has increased by up to $k-2$).  On the other hand we of course have

  \[
  \sum_{i=1}^{k-1} |V(H_i)|=|V(H)|.
  \]
We now apply parts \eqref{p.inequality} and \eqref{p.deleting} of the Lemma to each component by induction, and conclude that the Lemma does hold for $H$.
\end{proof}

In the following lemma we prove a property of $H_{n,m;k}$. It will be more convenient to work with $H_{n,p;k}$ where $m=\binom{n}{k}p$. We  use the fact that for any hypergraph property $\cH$ that is monotone increasing or decreasing with respect to adding edges,
\beq{relateH}{
\Pr(H_{n,m;k}\in\cH)\leq O(1)\Pr(H_{n,p;k}\in\cH).
}
This is well-known for graphs and is essentially a property of the binomial random variable, $E(H_{n,p;k})$,  the number of edges of $H_{n,p;k}$.

Similarly, if $\cA$ is a matrix property that is monotone increasing or decreasing with respect to adding columns, then
\beq{relate}{
\Pr(\bA_{n,m;k}\in\cA)\leq O(1)\Pr(\bA_{n,p;k}\in\cA).
}

\begin{lemma}\label{S-L}
Suppose that $m=O(n\log n)$.
\begin{enumerate}[(a)]
\item With probability $1-o(n^{-1})$, for every set of vertices $S$ of size $\ell_0=\log^{1/2}n\leq s\leq s_0=n^{1-\a}$ we have that
  $L(S)\leq s+\rdown{\th s},$
  where $\th=\frac{1}{\log^{1/4}n}$.  Here $H[S]$ is the hypergraph of edges belonging completely to $S$.
\item Then w.h.p., there are at most $n^{o(1)}$ vertices in cycles of size at most $\log^{1/2}n$.
\end{enumerate}
\end{lemma}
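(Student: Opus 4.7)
I will prove both parts by direct first-moment calculations, passing between $H_{n,m;k}$ and $H_{n,p;k}$ with $p=m/\binom{n}{k}$ via \eqref{relateH} whenever convenient; both relevant events are monotone increasing in the edge set.

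\textbf{Part (a).} Unpacking $L(H[S])=(k-1)|E(H[S])|+1$, the assertion $L(S)\le s+\lfloor\th s\rfloor$ is equivalent to $|E(H[S])|\le(s(1+\th)-1)/(k-1)$. For a fixed $S$ with $|S|=s$, $|E(H[S])|\sim\Bin(\binom{s}{k},p)$, so the standard upper-tail bound gives
\[
\Pr(|E(H[S])|\ge t)\le\bfrac{e\binom{s}{k}p}{t}^{t}.
\]
I take $t=\lceil s(1+\th)/(k-1)\rceil$; since $m=O(n\log n)$ we have $\binom{s}{k}p=O(s^k\log n/n^{k-1})$, so this bound is at most $(O(\log n)(s/n)^{k-1})^{t}$. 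Union-bounding over the $\binom{n}{s}\le(en/s)^s$ choices of $S$ and exploiting $(k-1)t\ge s(1+\th)$ via
\[
\bfrac{en}{s}^{s}\bfrac{s}{n}^{(k-1)t}\le e^{s}\bfrac{s}{n}^{\th s}
\]
produces a bound of the form $\exp\!\left(s+O(s\log\log n)+\th s\log(s/n)\right)$. For $s\le s_0=n^{1-\a}$, the negative term $\th s\log(s/n)\le -\a s\log^{3/4}n$ swamps the others, yielding per-size probability $\exp(-\Omega(s\log^{3/4}n))$. Summing from $s=\ell_0=\log^{1/2}n$ upward gives total failure probability $\exp(-\Omega(\log^{5/4}n))=o(n^{-1})$.

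\textbf{Part (b).} I count linear (Berge-style) cycles by length. A linear cycle of length $\ell$ is specified by $\ell$ cyclically ordered pivot vertices $v_1,\dots,v_\ell$ together with $k-2$ extra vertices for each edge $e_i\supseteq\{v_i,v_{i+1}\}$, modulo dihedral symmetry. The expected number of such structures in $H_{n,p;k}$ is at most
\[
\frac{n^\ell}{2\ell}\bfrac{n^{k-2}}{(k-2)!}^{\ell}p^{\ell}=\frac{(n^{k-1}p/(k-2)!)^{\ell}}{2\ell}=\frac{(O(\log n))^{\ell}}{2\ell},
\]
using $n^{k-1}p=O(\log n)$. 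Each such cycle contains at most $(k-1)\ell$ vertices, so summing over $2\le\ell\le\log^{1/2}n$ bounds the expected number of vertices in short cycles by
\[
\sum_{\ell\le\log^{1/2}n}(k-1)(O(\log n))^{\ell}\le(O(\log n))^{\log^{1/2}n}=\exp(O(\log^{1/2}n\,\log\log n))=n^{o(1)}.
\]
Markov's inequality converts this into the claimed w.h.p.\ statement.

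\textbf{Where the work lies.} The main obstacle is calibrating the slack $\th s$ in part (a): it must generate an exponent that beats both the entropy factor $(en/s)^s$ and the logarithmic loss $(O(\log n))^{t}$, and the choice $\th=1/\log^{1/4}n$ is tuned so that both are dominated once $s\ge\ell_0=\log^{1/2}n$; a smaller lower endpoint $\ell_0$ would demand a larger slack and the failure probability would no longer be $o(n^{-1})$. Part (b) is comparatively routine; the only care required is matching ``cycle'' to the notion excluded from cacti by Lemma \ref{simple}, but enumerating all closed alternating vertex--edge sequences rather than only linear cycles affects only constants.
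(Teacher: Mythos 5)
Your proposal is correct and takes essentially the same route as the paper: both parts are first-moment calculations passing to $H_{n,p;k}$ via \eqref{relateH}, with (a) a union bound over sets $S$ against an upper-tail estimate for the number of edges inside $S$, and (b) a direct enumeration of short cycles. The only cosmetic differences are that you invoke the standard binomial tail bound $\Pr(\Bin(N,p)\ge t)\le(eNp/t)^t$ in one shot (where the paper instead sums the first-moment term over all $L\ge s_1$ and splits into $L\le 2\alpha^{-1}s$ versus $L>2\alpha^{-1}s$), and that your cycle enumeration in (b) uses pivot-plus-filler vertices (where the paper simply counts all ordered $(k-1)\ell$-tuples); both yield the same bound $(O(\log n))^{\ell}$ per length and the same $n^{o(1)}$ conclusion.
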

\begin{proof}
(a) We can use \eqref{relateH} here with $p=\frac{C\log n}{n^{k-1}}$ for some constant $C>0$. Let $s_1=s+\rdown{\th s}+1$. The expected number of sets failing this property can be bounded by
\begin{align}
&\sum_{s=\ell_0}^{s_0}\binom{n}{s}\sum_{L\geq s_1}\binom{\binom{s}{k}}{L/(k-1)}\brac{\frac{C\log n}{n^{k-1}}}^{L/(k-1)} \nonumber\\
&\leq\sum_{s=\ell_0}^{s_0}\bfrac{ne}{s}^s \sum_{L\geq s_1}\bfrac{Ce^2s^{k}\log n(k-1)}{k!Ln^{k-1}}^{L/(k-1)}\nonumber\\
  &\leq\sum_{s=\ell_0}^{s_0}\sum_{L\geq s_1}(Ce^3\log n)^{L}\bfrac{s}{n}^{L-s} \bfrac{s}{L}^{L/(k-1)}\nonumber\\
  &\leq \sum_{s=\ell_0}^{s_0}\sum_{L\geq s_1}\brac{(Ce^3\log n)\bfrac{s}{n}^{1-s/L}}^{L}\label{oo(1)}
\end{align}

Let $u_{s,L}$ denote the summand in \eqref{oo(1)}. Then we have
\begin{align*}
u_{L,s}\leq \brac{(Ce^3\log n)^{2\a^{-1}}{\displaystyle \bfrac{s}{n}}^{\th}}^s&\leq n^{-(\a-o(1))\th s}&L\leq 2\a^{-1}s.\\
u_{L,s}\leq \brac{(Ce^3\log n){\displaystyle \bfrac{s}{n}^{1-\a/2}}}^L&\leq n^{-(1-o(1))\a L/2} &L>2\a^{-1}s.
\end{align*}
Thus,
\begin{align}
\sum_{s\geq 2}\sum_{L\geq s_1}u_{s,L}&\leq \sum_{s=\ell_0}^{s_0}\sum_{L=s+\rdup{\th s}}^{2\a^{-1}s}n^{-(\a-o(1))\th s}+ \sum_{s=\ell_0}^{s_0}\sum_{L\geq 2\a^{-1}s}n^{-(1-o(1))\a L/2}\nonumber\\
&\leq2\a^{-1}s_0\sum_{s=\ell_0}^{s_0}n^{-(\a-o(1))\th s}+ \sum_{s=\ell_0}^{s_0}n^{-(1-o(1))s/2}\nonumber\\
&=o(n^{-1}).\label{o(1)}
\end{align}
(b) The expected number of vertices in small cycles can be bounded by
$$
\sum_{\ell=2}^{\log^{1/2}n}\binom{n}{(k-1)\ell}((k-1)\ell)!p^\ell\leq  \sum_{\ell=2}^{\log^{1/2}n}(n^{k-1}p)^\ell\leq \sum_{\ell=2}^{\log^{1/2}n}(C\log n)^\ell=n^{o(1)}.$$
Part (b) now follows from the Markov inequality.
\end{proof}

\subsection{Growth of the mantle}
We now consider the change in the rank of the sub-matrix $\bC_2$ of the edge-vertex incidence matrix $\bA_m$ (see \eqref{c2}) corresponding to the 2-core of the column hypergraph, caused by adding a column to $\bA_m$.  In this section, we will assume in our calculations that no two edges share more than one vertex, and that the 2-core consists of a single connected component.  This does not affect our asymptotic analysis because simple first-moment calculations show that:
\begin{enumerate}
\item There are only a bounded number of edges sharing more than one vertex, and
\item Any subset of the random hypergraph of minimum degree must be of linear size; together with \eqref{conc}, below, this then implies that the 2-core can only have one connected component in the present regime, since the appearance of another component at any state would increase the size of the 2-core by too much.
\end{enumerate}

So suppose now that the addition of $e$ increases the size of the 2-core.
Let $A$ denote the set of additional vertices and $F$ denote the set of additional edges added to $C_2$ by the addition of $e$, where $A\subset V(F)$. We include $e$ in $F$. 

We remark first that with $c,x$ as in \eqref{cstar}, that q.s.
\beq{conc}{
|C_2|-n(1-e^{-cx}(1+cx))|\leq n^{3/4}, \qquad \text{and} \qquad |E(C_2)|-mx^{k/(k-1)}|\leq n^{3/4}.
}
Therefore we can assume that adding an edge to $\bA_m$ can only increase $C_2,\;E(C_2)$ by at most $n^{3/4}$.
We  use Lemma \ref{S-L} with $\a=3/4$ in our discussion of the hypergraph $F$.

Obviously the increase in rank from adding $F$ to the 2-core is bounded above by the size of the vertex-set $A$.  To bound it from below, we proceed as follows:

{\bf Case 1:} First consider the case where there are no cycles in $F$.  We will show that the rank increases by precisely the number of new vertices.

Let $|A|=k$.  We will define an ordering $a_1,\dots,a_k$ of $A$ and a corresponding ordering $f_1,\dots,f_k$ of a subset of $F$.  To begin, we claim there must exist $v\in A$ and $v\in f\in F$, $f\ne e$, such that $f\setminus\set{v}\subseteq C_2$. For this consider a longest path $e_1,\dots,e_\ell$ of edges in $F$.  Since the hypergraph is simple and contains no cycles, we have that $e_\ell\cap (\bigcup_{i=1}^{\ell-1} e_i)=e_\ell\cap e_{\ell-1}=\{v\}$ for some single vertex $v$.  On the other hand, all vertices of $e_\ell$ must have degree 2 in $F\cup C_2$, and so $e_\ell\setminus v$ must lie entirely in $C_2$.  We set $f_1=e_\ell$, $a_1=v$, and then we remove $f_1$ from $F$ and $a_1$ from $A$, defining $C_2^1=C_2\cup f_1$ (though it is not a two-core of any hypergraph), and apply induction to obtain the sequences $a_1,\dots,a_k$, $f_1,\dots,f_k$, and the corresponding sequence $C_2^i$ defined by $C_2^0=C_2$, and $C_2^{i+1}=C_2^i\cup f_{i+1}$.

These sequences have the property that
\[
\rank(C_2^{i+1})=\rank(C_2^i)+1,
\]
since the edge $f_i$ added to $C_2^i$ in step $i+1$ contains exactly one vertex outside of $C_2^i$.  (In the matrix, we are adding a column containing a 1 in a row which previously had no 1's).

In particular, the rank in this case increases by exactly the size of $A$.

{\bf Case 2:} The total contribution to the rank of the 2-core in $m=O(n \log n)$ steps from the case where $F$ contains a cycle of length at most $\log^{1/2}n$ can be bounded by $n^{3/4+o(1)}$. This follows from Lemma \ref{S-L}(b) and \eqref{conc}. This is negligible, since the core has size $\Omega(n)$ in the regime we are discussing.

{\bf Case 3:} Suppose that $F$ contains cycles of size at least $\log^{1/2}n$ which we remove by deleting $s$ edges. When we do this we may lose up to $ks$ vertices from $A$. Let the resulting vertex set be $A'$ and edge set be $F'$. Up to $ks$ vertices of $A'$ may have degree 1. Attach these vertices to $C_2$ using disjoint edges to give edge set $F''$. All vertices of $A'$ now have degree at least 2 in $F''$ and $F''$ has no cycles. According to the argument in Case 1, the increase in rank due to adding $F''$ is $|A'|\ge |A|-ks$ and this is at most $ks$ larger than the increase in rank due to adding $F'$. Thus the increase in rank due to adding $F\supseteq F'$ is at least $|A|-2ks$  and at most $|F| \le |A|+s+1$. It follows from Lemma \ref{key}(c) and Lemma \ref{S-L}(a) that $s=o(|A|)$.

In summary we find that if $m=O(n\log n)$ and $m \ge c^*n/k$ then, with probability $1-o(n^{-1})$, the rank of $\bC_2$ satisfies
\beq{end}{
\brac{1-o(1)}|C_2|\leq \rank(\bC_2)\leq |C_2|.
}
The upper bound follows because the rank of $\bC_2$ is at most the number of rows in $\bC_2$. This proves \eqref{CC2}. To finish the proof of Theorem \ref{th1} we require that \eqref{end} remains true if we take expectations. For this we  use the error probability of $o(n^{-1})$ in \eqref{o(1)}.

\subsection{Proof of Theorem \ref{th2}}
{\bf Proof of part (i):}\\
Given a set of rows $S$, the number of choices of column  (distinct edges) that have an odd number of non-zero entries in $S$ is
$$T_{s,k}=\binom{s }{1}\binom{n-s}{k-1}+\binom{s}{3}\binom{n-s}{k-3}+\cdots+\binom{s}{k}.$$
If $\rank(\bA_{n,p;k})<n^*$ then there exists a set $S$ of rows such that (i) each column of $\bA_{n,p;k}$ has an even number of non-zero entries $j$ in $S$ and (ii) $|S|\leq n^*$. For a fixed $S$, denote this event by $\cB_S$ and note that it is monotone decreasing. Then
\beq{eq1}{
\Pr(\cB_S)=(1-p)^{T_{s,k}}.
}
For $s \ge k$,
\[
T_{s,k} \ge \binom{s }{1}\binom{n-s } {k-1}+\binom{s}{k}= \frac{s}{(k-1)!} \brac{\frac{s^{k-1}}{k}+(n-s)^{k-1}} \ooi
\]
The bracketed term on the right hand side is minimized when $s=\a n$ where
$\a=
k^{1/(k-2)}/(1+k^{1/(k-2)})$. Let $\b_k=(\a^{k-1}/k+(1-\a)^{k-1})$ then
\[
T_{s,k} \ge \b_k s \frac{n^{k-1}}{(k-1)!} \ooi.
\]
We can choose $p=\frac{(A+2)\log n}{\b_k\binom{n-1}{k-1}}$ and then use monotonicity of rank as a function of $p$ to claim the result for larger $p$.
\begin{align}
\Pr(\exists S:\cB_S\text{ occurs})
&\leq \sum_{s=1}^{n^*}\binom{n}{s}(1-p)^{T_{s,k}}\nonumber\\
&\leq \sum_{s=1}^{n^*}\brac{\frac{ne}{s}\cdot
 \exp\set{-p\b_k \frac{n^{k-1}}{(k-1)!} \ooi}}^s\label{smallS}\\
&\leq \sum_{s=1}^{n^*}n^{-(A+1)s}= O\bfrac{1}{n^{A+1}}\nonumber.
\end{align}
We now use \eqref{relate} to transfer this bound to $\bA_{n,m;k}$.

{\bf Proof of part (ii).}\\
 Let $m=n(\log n+c_n)/k$. Assume that $c_n\to c$.
We first observe that if $Z_s$ denotes the number of sets of $s=O(1)$ empty rows then
\mult{empty}{
\E(Z_s)=\binom{n}{s}\frac{\binom{\binom{n-s}{k}}{m}}{\binom{\binom{n}{k}}{m}}=\binom{n}{s} \prod_{i=0}^{m-1}\frac{\binom{n-s}{k}-i}{\binom{n}{k}-i}=\binom{n}{s} \bfrac{\binom{n-s}{k}}{\binom{n}{k}}^m \brac{1+O\bfrac{m^2}{n^k}}\\
\approx \frac{n^s}{s!}\cdot \prod_{i=0}^{k-1}\brac{1-\frac{s}{n-i}}^m=\frac{n^s}{s!}\cdot \prod_{i=0}^{k-1}\exp\set{-\frac{ms}{n}+O\bfrac{m}{n^2}}\approx \frac{n^s}{s!}e^{-skm/n} \approx \frac{e^{-cs}}{s!}.
}
The method of moments implies that $Z_1$ is asymptotically Poisson with mean $e^{-c}$ and so
\beq{good-zero}{
\Pr(Z_1=0) \approx e^{-e^{-c}}.
}
Going back to \eqref{smallS} with $p=(\log n+c_n)/\binom{n-1}{k-1}$ we see that we only need to consider $2\leq s\leq 4n^{1-\b_k}$. For these values of $s$, $T_{s,k}$ is bounded below by $s\binom{n-s }{k-1}\approx s\binom{n-1}{k-1}$. Thus we can bound the RHS of \eqref{smallS} from above by
$$
\sum_{s=1}^{4n^{1-\b_k}}\brac{\frac{3n}{s}\cdot\exp\set{-p\binom{n-1}{k-1}}}^s =\sum_{s=1}^{4n^{1-\b_k}}\bfrac{O(1)}{s}^s.
$$
Thus,
\beq{bigS}{
\Pr(\exists S,\log\log n\leq |S|\leq 4n^{1-\b_k}:\cB_S\text{ occurs})\leq
\sum_{s=\log\log n}^{n^{1-\b_k}}\bfrac{O(1)}{s}^s=o(1).
}
Finally we consider $2\leq s\leq L=\log\log n$. The final step is to prove (w.h.p) that when $p=(\log n+c_n)/\binom{n-1 }{k-1}$, $c_n \ra c$ constant the only obstruction to $\rank(\bA_{n,p;k})=n^*$ is the existence of empty rows ($Z_1>0$).

Given a set $S$, the number of choices of column that have an odd number of non-zero entries in $S$ (Type A columns) is given by $T_{s,k}$ above, and the number of choices of columns that have an even number of non-zero entries in $S$ (Type B columns) is
\[
R_{s,k}=\binom{s }{ 2}\binom{n-s }{ k-2}+ \cdots + \binom{s }{ k-1}(n-s).
\]
For $s \le L$, $R_{s,k} \le s^2 n^{k-2}$. The expected number $\mu_s$ of sets $S$ with no Type A columns and at least one Type B column is
$$\mu_s= \binom{n}{s}\brac{1-(1-p)^{ R_{s,k}}}(1-p)^{T_{s,k}}\le \frac{n^s}{s!}(pR_{s,k})\;e^{-ps\binom{n-1}{ k-1}\ooi }= O\bfrac{\log n}{n} e^{-cs}.$$
Thus, for constant $c$,
\beq{2n}{
\sum_{s=2}^{L}\mu_s=o(1).
}
Thus w.h.p. there is no set of $2\leq s\leq \log\log n$ rows where the dependency does not come from the rows all being  zero.
\proofend
\subsection{Proof of Theorem \ref{th2a}}
Because $c$ in \eqref{good-zero} is arbitrary and having a zero row is a monotone decreasing event, we can see that if $m_0=n(\log n-\log\log n)/k$ then $Z_1=Z_1(m_0)>0$ w.h.p. The reader can easily check that equations \eqref{bigS} and \eqref{2n} continue to hold. It follows that w.h.p. the rank of $\bM_{m_0}$ is $n^*-Z_1$. It then follows that $m_1=m^*$ if we never add a column that reduces the number of non-zero rows by more than one. Now \eqref{good-zero} implies that the expected number of zero rows in $\bM_{m_0}$ is $O(\log n)$ and so $Z_1\leq \log^2n$ w.h.p. So given this, the probability we add add a column that reduces the number of non-zero rows by more than one in the next $O(n\log n)$ column additions, is $O(n\log n \times ((\log^2n)/n)^2=o(1)$.
\section{Minimum Weight Basis}\label{mwb}
The first task here is to prove \eqref{Int1}. Let $B_{n,k}$ denote a minimum weight basis and let $W_{n,k}$ denote its weight. For a given a real number $X$ we can write
\[
X= \int_{p=0}^{X} dp = \int_{p=0}^{1} 1_{p \le X}dp.
\]

Thus
\begin{align}
W_{n,k}&=\sum_{\bc\in B_{n,k}}X_{\bc}\nonumber\\
&=\sum_{\bc\in B_{n,k}}\int_{p=0}^11_{p \le X_\bc}dp\label{qaz}\\
&=\int_{p=0}^1\sum_{\bc\in B_{n,k}}1_{p \le X_\bc}dp\nonumber\\
&=\int_{p=0}^1|\set{\bc\in B_{n,k}: p \le X_\bc }|dp\nonumber\\
&=\int_{p=0}^1(n^*-\rank(\bA_{p}))dp.\label{greedy}
\end{align}
Here $\bA_{p}$ is any matrix made up of those columns $\bc\in\Omega_{n,k}$ with $X_\bc\leq p$. And let $A_p$ denote the corresponding hypergraph.

{\bf Explanation for \eqref{greedy}:} Finding a minimum cost basis $B$ can be achieved via a {\em greedy algorithm}. We first order the columns of $\Omega_{n,k}$ as $\bc_1,\bc_2,\ldots,\bc_N, N=\binom{n}{k}$ in increasing order of weight $X_\bc$. Treating $B$ as a set of columns, we initialise $B=\emptyset$, and for $i=1,2,\ldots,N$ add $\bc_i$ to $B$ if it is linearly independent of the columns of $B$ selected so far. This means that for any $0\leq p\leq 1$, the number of columns in $B$ with $X_\bc > p$ must be equal to the co-rank of the set of columns selected before them i.e $B_p=\set{\bc\in B:X_\bc\leq p}$.
We claim that $B_p$ is a maximal linear independent subset of the columns of $\bA_{p}$. If it were not maximal, then another column of $\bA_{p}$ would have been added to $B_p$ by the greedy algorithm.

We obtain $\E W_{n,k}$ in \eqref{Int1} by taking the expectation of \eqref{greedy},  using Fubini's theorem to take the expectation inside the integral.

We first argue that
\beq{gt}{
\E(W_{n,k})=\Omega(n^{-(k-2)}).
}																
Let $\bc=(c_1,...,c_n)$, where $c_i\in\{0,1\}$ denotes the $i$-th coordinate of $\bc$. We can bound $W_{n,k}$ from below by $\sum_{i=1}^n\min\set{X_\bc: c_i=1}$. Let $N=\binom{n}{k}$. The number of ones in a fixed row of $\bA_{n,N;k}$ is $L=Nk/n$. The expected minimum of $L$ independent uniform $[0,1]$ random variables is $1/(L+1)$. Hence
$$\E(W_{n,k})\geq \frac{n^2}{k\binom{n}{k}+n}$$
and \eqref{gt} follows.

We next observe that for $c$ large we have
\beq{sizex}{
1-2ke^{-c}\leq x\leq 1.
}
Indeed, putting $x=1-y$ we have $(1-y)^{1/(k-1)}=1-e^{-c(1-y)}$. We see that if $f(y)=(1-y)^{1/(k-1)}-(1-e^{-c(1-y)})$ then $f(0)>0$ and $f(2ke^{-c})<0$ for large $c$.

Thus for $c$ large we have
\beq{clarge}{
\frac{c}{k}-\frac{cx^{k/(k-1)}}{k}+(1-e^{-cx}(1+cx))\geq 1-e^{-cx}(1+cx)\geq 1-e^{-99c/100}.
}
Fix some small $\e>0$ and let
\begin{equation}\label{Ceps}
c_\e=2\log 1/\e.
\end{equation}
It follows from Theorem \ref{th2}(i) with $A=k$, $p=km/(n {n-1 \choose k-1})$. and \eqref{Int1} that
\begin{align}
\E(W_{n,k})&\approx \int_{p=0}^{k!\g n^{1-k}\log n}(n^*-\E(\rank(\bA_{p})))dp\nonumber\\
&=\frac{(k-1)!}{n^{k-1}}\int_{c=0}^{k\g \log n} (n^*-\E(\rank(\bA_{c(k-1)!/n^{k-1}})))dc\nonumber\\
&=(I_1+I_2+I_3)\frac{(k-1)!}{n^{k-1}},\label{Eint}
\end{align}
where
$I_1=\int_{c=0}^{c_k^*}\cdots dc$ and $I_2=\int_{c_k^*}^{c_\e}\cdots dc$ and $I_3=\int_{c_\e}^{k\g \log n}\cdots dc$.

Since $H_{c/n^{k-1}}$ q.s. has $m \approx cn/k$ edges, it follows from Theorem \ref{th1} that
\beq{I1}{
I_1\approx \int_{c=0}^{c_k^*}\brac{n^*-\frac{cn}{k}}dc\approx c_k^*n\brac{1-\frac{c_k^*}{2k}}.
}
On the other hand, using the expression for rank from Corollary \ref{cor1}, with $x^{1/(k-1)}=(1-e^{-cx})$ substituted from \eqref{cstar}.
\begin{align}
I_2& \approx n\int_{c_k^*}^{c_\e} \brac{1-\brac{\frac{c}{k}-\frac{cx^{k/(k-1)}}{k}+(1-e^{-cx}(1+cx))}}dc \label{In2}\\
&=n\int_{c_k^*}^{\infty}\brac{ e^{-cx}(1+cx(k-1)/k)-\frac{c}{k}(1-x)}dc+A_\e,
\label{In2a}
\end{align}
where
\beq{Aeps}{
  |A_\e|=n\int_{c_\e}^{\infty}\brac{ e^{-cx}(1+cx(k-1)/k)-\frac{c}{k}(1-x)}dc\leq n\int_{c_\e}^{\infty} e^{-99c/100}dc\leq 2\e^2n.
  }
Theorem \ref{th1} as stated holds for $m=O(n)$, and thus cannot be used directly to estimate rank when $m/n \rai$. For $I_3$ we recall that $\bC_2=\bC_2(c)$ denotes the sub-matrix of $\bA_{c(k-1)!/n^{k-1}}$ induced by the edges of the 2-core. We then write
\beq{I3}{
I_3\leq \int_{c_\e}^{k\g \log n}(n^*-\E(\rank(\bC_2(c))))dc.
}
We first check the size of $|C_2|$ for $c=c_\e$. It follows from \eqref{sizex} that for $c$ large,
$$x^{1/(k-1)}-cx+cx^{k/(k-1)}=x^{1/(k-1)}-cxe^{-cx}\geq 1-e^{-99c/100}.$$
So, for $c=O(1)$ and large we have from \eqref{sizeC2} that w.h.p.
$$|C_2|\geq (1-o(1))n(1-e^{-99c/100}).$$
Let $m_\e=c_\e n/k$.  If we add an edge $e$ with one vertex not in $C_2$ and the remaining vertices in $C_2$ then the rank of $\bC_2$ goes up by one. Denote this event by $\cA_e$. Let $\bC^*=\bC^*(t)$  denote the following submatrix of $\bC_2$ at the time the number of columns is $m_\e+t$. We let $\bC^*(0)=\bC_2(c_\e)$ and we add the column corresponding to $e$ to $\bC^*$ only if $\cA_e$ occurs. Let $X_t$ denote the rank of $\bC^*(t)$, and let $Y_t=n^*-X_t$. Note that $X_t$ is equal to $\rank(\bC_2(c_\e))$ plus the number of columns in $\bC^*(t)$ that are not in $\bC_2(c_\e)$, and that $X_t\leq \rank(\bA_{m_\e+t})$. Note also that $|\rank(\bA_{m_\e+t})-\rank(\bA_{n,p_t,k})|\leq n^{2/3}$ where $p_t=(m_\e+t)/\binom{n}{k}$. Using \eqref{Ceps} we have that $Y_0\leq(1+o(1))ne^{-99c_\e/100}\leq 2\e^2n$. Now,
\beq{Be}{
\Pr(\cA_e)=\frac{Y_t\binom{n-Y_t}{k-1}}{\binom{n}{k}}\geq \frac{kY_t}{2n}
}
and so
\beq{Yt}{
\E(Y_{t+1}\mid Y_t)\leq Y_t-\frac{kY_t}{2n}.
}
Let $h=n^{1/2}$ and $u_r=Y_{rh}$. Assume that $n^{9/10}\leq Y_t\leq Y_0 $. It follows from \eqref{Be} and Hoeffding's Theorem \cite{Hoef} that q.s.
$$u_{r+1}\leq u_r-\frac{kh}{3n}u_r=\brac{1-\frac{kh}{3n}}u_r$$
and so q.s.
\beq{ur}{
u_r\leq \brac{1-\frac{kh}{3n}}^ru_0.
}
Going back to \eqref{I3} we can see that
\beq{I3a}{
I_3\leq O(n^{9/10})+\frac{hu_0}{n}\sum_{r=0}^\infty\brac{1-\frac{kh}{3n}}^r=
O(n^{9/10})+\frac{3u_0}{k}.
}
Here the final $O(n^{9/10})$ term accounts for only using \eqref{Yt} for $Y_t\geq n^{9/10}$ and for the errors of size $O(n^{2/3})$ introduced in the $m$ model versus the $p$ model of our matrix, see \eqref{relateH}, \eqref{relate}.

It follows from \eqref{I1}, \eqref{In2}, \eqref{Aeps} and \eqref{I3a} that $I_1+I_2+I_3$ are within $O(\e^2n)$ of what is claimed in the theorem. But $\e$ can be made arbitrarily small and the theorem follows.

\subsection{ Bounds for finite $k$}
We begin by estimating $c_k^*$. Let $x$ be as in \eqref{cstar}, then going back to the definition \eqref{defck}, we can determine the value of $c_k^*=c(x)$ from
\begin{equation}\label{eq0}
c \bfrac{k-1}{k} x^{\frac{k}{k-1}}-cx+x^{\frac{1}{k-1}}=0.
\end{equation}
Solve for $c$, and put $y=x^{1/(k-1)}$ to give
\begin{eqnarray}\label{cval}
c&=& \frac{1}{y^{k-2}-((k-1)/k)y^{k-1}}.
\end{eqnarray}
Substituting for $c$ via \eqref{cstar} gives
\begin{eqnarray}\label{yyy}
y=1-\exp \set{-\frac{ky}{k-(k-1)y}}.
\end{eqnarray}
If $x \in (0,1)$  then $y \in (0,1)$, and $y \ge x$. We look for solutions of the form
$y=1-z$. Making this substitution \eqref{yyy} becomes $z=q(z)$ where
\[
q(z)=\exp \set{ - \frac{k(1-z)}{1+(k-1)z}}.
\]
Let
\begin{equation}\label{zvalu}
z = z(\delta)=\frac{\delta}{k-(k-1)\delta},
\end{equation}
then (stretching notation somewhat) $q(\delta)=e^{-k(1-\delta)}$. Consider $f(\delta)=z(\delta)-q(\delta)$, then
\[
f(\delta) \ge \frac{\delta}{k}\brac{1+\frac{k-1}{k}\delta}- e^{-k}e^{k\delta}.
\]
Substitute $\delta=\th ke^{-k}$ to give
\[
f(\th) \ge e^{-k} \brac{ \th(1+\th(k-1)e^{-k})-e^{\th k^2e^{-k}}}.
\]
The function $k^2e^{-k}$ in the exponent of the last term is monotone deceasing for $k \ge 2$. Let $\th=3/2$, then for $k \ge 4$, it can be checked that $f(\th,k)>0$. Now $f(0)<0$ and so there is a solution  to $f(\delta)=0$ in the interval $(0,  \th ke^{-k})$.

Substitute $y=1-z$ into \eqref{cval} to obtain
\beq{cek}{
\frac ck=\frac{1}{(1-z)^{k-2}(1+(k-1)z)}
}
\begin{lemma}\label{cvalu}
\begin{enumerate}[(i)]
\item
Let   $\th=3/2$, then  for $k \ge 4$,
\begin{equation}\label{cvals}
k(1-\th e^{-k}) \le c_k^* \le k.
\end{equation}

\item For $k=3$, $c^*_3=2.753699...$.
\item If $k\geq 4$ and $c\geq c_k^*$ then the solution $x$ to \eqref{cstar} satisfies $x\geq 1-3ke^{-c}/2$.
\end{enumerate}

\end{lemma}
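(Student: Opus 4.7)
The plan is to work with an explicit parameterization of $c$ by $\delta$ derived from the substitutions in \eqref{zvalu}--\eqref{cek}: a short algebraic simplification of \eqref{cek} yields
\[
\frac{c}{k} \;=\; \frac{(1-(k-1)\delta/k)^{k-1}}{(1-\delta)^{k-2}},
\]
which I evaluate at $\delta=\delta^* \in (0,\th k e^{-k})$, the range already established in the text. Setting $a=\delta/k$, the Taylor expansion $\log(c/k) = (k-1)\log(1-(k-1)a) - (k-2)\log(1-ka)$ has coefficient of $a^n/n$ equal to $(k-2)k^n - (k-1)^{n+1}$; this equals $-1$ at $n=1$, and the elementary comparison $(k-2)/(k-1) > ((k-1)/k)^n$, valid for $n\geq 2$ and $k\geq 3$, shows it is strictly positive for every $n\geq 2$.

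For part (i), the expansion gives $\log(c_k^*/k) \geq -a^* \geq -\th e^{-k}$, hence $c_k^* \geq k e^{-\th e^{-k}} \geq k(1-\th e^{-k})$. For the upper bound $c_k^*\leq k$, the derivative of $c/k$ at $\delta=0$ equals $-1/k<0$ and $c/k$ is monotone decreasing on $[0,1/(k-1)]$, a range that contains $[0,\th k e^{-k}]$ for $k\geq 4$. Part (ii) is a direct numerical computation: setting $k=3$ in \eqref{yyy} reduces to $y=1-\exp(-3y/(3-2y))$; standard numerics locate its unique positive root, and substitution into $c_3^*=3/(3y-2y^2)$ from \eqref{cval} yields $c_3^*\approx 2.75370$.

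For part (iii), I write $w:=1-x$ and apply Bernoulli's inequality to \eqref{cstar} to obtain $w \leq (k-1)e^{-c+cw} =: \phi(w)$. Since $\phi(w)-w$ is convex with at most two zeros $w_1\leq w_2$ in $[0,1]$, the set $\{w : w\leq \phi(w)\}$ is $[0,w_1]\cup[w_2,1]$; a continuity argument (using $w(c)\to 0$ as $c\to\infty$, together with the fact that $\phi$ has two distinct fixed points throughout $[c_k^*,\infty)$ for $k\geq 4$) places $w$ in $[0,w_1]$. It then suffices to show $\phi(w_0)\leq w_0$ at $w_0 := (3/2)k e^{-c}$, which forces $w_0\in[w_1,w_2]$ and hence $w_1\leq w_0$. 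This reduces to $\exp((3/2)kce^{-c}) \leq 3k/(2(k-1))$; combining the lower bound $c_k^*\geq k(1-\th e^{-k})$ from part (i) with the monotonicity of $te^{-t}$ for $t\geq 1$ verifies this for every $k\geq 4$ (the tightest case being $k=4$). The main obstacles I anticipate are the positivity check for the Taylor coefficients $(k-2)k^n-(k-1)^{n+1}$ when $n\geq 2$ (the algebraic heart of part (i)), and the tight numerical verification of the exponential inequality at $k=4$ in part (iii), where the margin between $(3/2)kce^{-c}$ and $\log(3k/(2(k-1)))$ is slimmest.
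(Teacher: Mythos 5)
Your proposal takes a genuinely different route from the paper on parts (i) and (iii), and the core ideas are sound, but part (iii) has a real gap you did not flag.

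For part (i), your reduction of \eqref{cek} to $c/k = (1-(k-1)\delta/k)^{k-1}/(1-\delta)^{k-2}$ is correct (one checks $1-z = k(1-\delta)/(k-(k-1)\delta)$ and $1+(k-1)z = k/(k-(k-1)\delta)$). Your Taylor-coefficient computation is also correct: at $n=1$ the coefficient is $-1$, and for $n\ge 2$ positivity of $(k-2)k^n-(k-1)^{n+1}$ reduces to $(k-2)/(k-1)>((k-1)/k)^2$, i.e.\ $k^2-3k+1>0$, true for $k\ge 3$. This gives $\log(c_k^*/k)\ge -\delta^*/k>-\theta e^{-k}$, a clean alternative to the paper's Bernoulli-style manipulation. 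For the upper bound, however, the claim that ``$c/k$ is monotone decreasing on $[0,1/(k-1)]$'' is not established by the sign of the derivative at $\delta=0$ alone; you still need the paper's observation that the denominator $(1-z)^{k-2}(1+(k-1)z)$ has derivative $(1-z)^{k-3}(1-(k-1)^2z)>0$ for $z<1/(k-1)^2$, i.e.\ $\delta<1/(k-1)$. With that in hand the bound $c_k^*\le k$ is immediate. Part (ii) is the same numerical computation as the paper.

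For part (iii) your framework differs substantially from the paper's. The paper first shows $\e=1-x<1/c$ directly (by checking the sign of $f(\e)=1-\e-(1-e^{-c+c\e})^{k-1}$ at $\e=0$ and $\e=1/c$), then unrolls the recurrence $\e\le(k-1)e^{-c+c\e}$ twice to reach $\e\le 3ke^{-c}/2$. You instead invoke the convexity of $\phi(w)-w$, identify the allowed region $[0,w_1]\cup[w_2,1]$, and appeal to a continuity argument in $c$ to place $w$ in $[0,w_1]$. This is where the gap lies: (a) you assert but do not verify that $\phi$ has two \emph{distinct} fixed points for all $c\ge c_k^*$, $k\ge 4$ (equivalently $1+\log(c(k-1))<c$; this does hold, e.g.\ $3.457<3.89$ at $k=4$, but it must be checked, and degenerate $w_1=w_2$ would destroy the argument); (b) you need continuity of $c\mapsto x_c$ (the largest root of an implicit equation), which is not automatic; and (c) the continuity/IVT step itself must be stated with care, since $w_1(c),w_2(c)$ move with $c$ (one workable version applies IVT to $w(c)-\tfrac12(w_1(c)+w_2(c))$). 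None of these are insurmountable, but as written they are assumed rather than proved, whereas the paper's two-step bootstrap avoids the fixed-point topology entirely. Your closing inequality $\exp((3/2)kce^{-c})\le 3k/(2(k-1))$ is correct and in fact has more slack at $k=4$ than the corresponding step in the paper.
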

\begin{proof}
(i) For the upper bound we note that for $k \ge 3$ the denominator of $c$ in \eqref{cek} is
monotone increasing for $z \le 1/(k-1)^2$ from a value of one when $z=0$. For the lower bound, as
$1/(1-z)^{k-2} > 1+(k-2)z$, it follows from \eqref{cek}, the definition of $z$ in \eqref{zvalu}, and $\delta < \th ke^{-k}$ that
\[
\frac{c}{k} > \frac{1+(k-2)z}{1+(k-1)z}  = 1-\frac{\delta}{k} > 1-\th e^{-k}.
\]

(ii) Set $y=\sqrt{x}$ and invert \eqref{cstar} to obtain
\[
c=\frac{1}{y^2} \log \frac{1}{1-y}.
\]
Inserting this into  \eqref{cval} gives
\[
y+\brac{\frac23 y-1}\log\frac{1}{1-y}=0.
\]
This was solved numerically to give the following results for $y,x,c^*_3 $
\begin{equation}\label{values}
y=0.8833916,\quad x=0.9398891,\quad c^*_3=2.753699.
\end{equation}

(iii)

Let $x=1-\e$. We first verify that $\e\leq 1/c$. Putting $f(\e)=1-\e-(1-e^{-c+c\e})^{k-1}$ we see that $f(0)>0$ and $f(1/c)<0$ for $c\geq c_k^*$ as given in (i).   If $ay<1$, then $1-(1-y)^a < ay$.
As $(k-1)e^{-c+c\e}<1$ for any $\e<1-(\log(k-1))/c$,
$$f(c^{-1})=1-c^{-1}-(1-e^{-c+1})^{k-1}\leq 1-c^{-1}-1+(k-1)e^{-c+1}.$$
Now $c(k-1)e^{-c+1}$ is decreasing as a function of $c$. And for $k\geq 4$, $k(k-1)e^{-c+1}$ and $e^{(3k/2)e^{-k}}$ are decreasing as functions in of $k$. Therefore, for $c$ satisfying \eqref{cvals},
\[
c(k-1)e^{-c+1}< k(k-1)e^{-(k-1)}e^{(3k/2)e^{-k}} <1.
\]

 Let $x=1-\e$, and $\delta=e^{-c+c\e}$. Rewrite \eqref{cstar} as
\beq{ceps}{
-\log(1-\e)=\e+\frac{\e^2}2+\cdots=(k-1)\brac{\delta+\frac{\delta^2}2+\cdots}.
}
It must hold that $\e \le (k-1)\delta$ otherwise the left hand side is greater than the right hand side. Thus, as $\e <1/c$,
\[
\e\leq (k-1)e^{-c+c\e}\leq (k-1)e^{-c+1}.
\]
A repeated application of this bound, \eqref{cvals}  and direct calculation gives
$$
\e\leq (k-1)\exp\set{-c+(k-1)ce^{-c+1}}\leq (k-1)\exp\set{-c+(k-1)ke^{1-(1-\th e^{-k})k}}\leq  3ke^{-c}/2.$$
\end{proof}

Going back to \eqref{I1} and using Lemma \ref{cvalu}(i), we see that for $k\geq 4$,
\beq{I1a}{
\frac{kn}{2}\brac{1-\frac{9}{4}e^{-2k}} \leq I_1 \leq \frac{kn}{2}.
}

We evaluate $I_2$ from \eqref{In2}--\eqref{In2a} in two parts. Firstly, using Lemma \ref{cvalu}(iii) for $c\geq c_k^*$,
\beq{missed}{
-\frac32 ce^{-c} \le -\frac{c}{k}(1-x) \le 0.
}
Note also that $1-3ke^{-c}/2 \ge 1-1/2k$ for $k \ge 4$ and $c\ge c^*_k$. Thus
\[
e^{-c}\brac{1+c \frac{(k-1)(2k-1)}{2k^2}} \le e^{-cx}\brac{1+cx\frac{k-1}{k}} \le
e^{1/2}e^{-c}\brac{1+\frac{c(k-1)}{k}}.
\]
For the LHS we replace $e^{-cx}$ by $e^{-c}$ (since $x\leq 1$) and $x$ by $1-1/2k$. For the RHS we replace $cx(k-1)$ by $c(k-1)$, and $e^{-cx}=e^{-c+c\e}$. Using Lemma \ref{cvalu}(i) and (iii), as $c^*>1$, it follows that
\beq{included}{
e^{c\e} \le e^{ (3k/2)ce^{-c}}  \le e^{ (3k/2)c^*e^{-c^*}} \le e^{1/2}.
}
Adding the contributions from \eqref{missed} and \eqref{included} we find that
\[
n\int_{c^*}^\infty e^{-c}\brac{1-c \frac{k^2+3k-1}{2k^2}} \;dc \le \; I_2 \; \le n e^{1/2}\int_{c^*}^\infty e^{-c}\brac{1+\frac{c(k-1)}{k}} \; dc.
\]
Thus, with the {\em indefinite integral} $\int e^{-c}(1+Ac)=-e^{-c}(1+A+Ac)$, we get
\[
ne^{-c^*_k} \brac{\frac{k^2-3k+1}{2k^2}-c^*_k \frac{k^2+3k-1}{2k^2}} \le
\;I_2\; \le n e^{1/2}e^{-c_k^*}\brac{\frac{2k-1}{k}+c^*_k \frac{k-1}{k}},
\]
or more simply
\[
-n \frac{k}{2}e^{-c^*_k}\brac{1+\frac{3}{k}}\le\; I_2 \;\le n\frac{k}{2}e^{-c^*_k}\;2e^{1/2}\brac{1+\frac{1}{k}}.
\]
Noting that $e^{-c^*_k}\le 6e^{-k}/5$ for $k \ge 4$, we have
\[
n \frac{k}{2}\brac{1-\frac94 e^{-2k}-\frac{21}{10} e^{-k}} \le I_1+I_2 \le
n \frac{k}{2}\brac{1+3e^{1/2}e^{-k}}.
\]
Thus, for some $\e_k$, $|\e_k| \le 5$,
\[
I_1+I_2= n \frac{k}{2}\brac{1+\e_ke^{-k}}.
\]

\section{Open questions}
\begin{enumerate}[{\bf Q1}]
\item The formula for the cost of a minimum weight basis when $k\geq 3$ given by Theorem \ref{WTF} is asymptotically accurate, but lacks the elegance of the case where $k=2$. Can the expression be simplified for say, $k=3$?
\item The $\z(3)$ result of \cite{F1} was generalised quite substantially to consider minimum weight spanning trees of $d$-regular graphs, when $d$ is large, see \cite{BFM}. In the context of $\bA_{n,m;k}$, this suggests that we consider the case where each row has exactly $d$ ones. Here we can study the rank as well as $W_{n,k}$.
\end{enumerate}


\end{document}